\theoremstyle{plain}
\newtheorem{main}{Theorem}
\newtheorem{maincor}[main]{Corollary}
\newtheorem{theorem}{Theorem}[section]
\newtheorem{lemma}[theorem]{Lemma}
\newtheorem{proposition}[theorem]{Proposition}
\newtheorem{corollary}[theorem]{Corollary}
\theoremstyle{remark}
\newtheorem{remark}[theorem]{Remark}
\newtheorem{definition}[theorem]{Definition}
\newcommand{\PH}{\operatorname{PH}}
           \def\ea{\end{array}}
          \def\ec{\end{center}}
     \def\ed{\end{description}}
        \def\ee{\end{equation}}
       \def\eea{\end{eqnarray}}
     \def\eeaa{\end{eqnarray*}}
 \def\et{\end{thebibliography}}
\def\Orb{{\rm Orb}}
\def\Diff{{\rm Diff}}
\def\HC{{\rm HC}}
\def\cU{{\mathcal U}}
\def\cV{{\mathcal V}}
\def\cF{{\mathcal F}}
\def\cM{{\mathcal M}}
\def\TT{{\mathbb T}}
\title{Shub's example revisited}
\author{Chao Liang, Radu Saghin, Fan Yang and Jiagang Yang}
\date{\today}
\thanks{Chao Liang has been supported by NNSFC grants   12271538   and 12071328 and the Disciplinary Funds in CUFE\\
Jiagang Yang was partially supported by CNPq, FAPERJ, PRONEX and NNSFC grants 11871487 and 12071202}
\address{School of Statistics and Mathematics, Central University of Finance and Economics,
Beijing, 100081, China}
\email{chaol\@@cufe.edu.cn}
\address{Instituto de Matem\'atica, Pontificia Universidad Cat\'olica de Valpara\'iso,
Blanco Viel 596, Cerro Bar\'on, Valpara\'iso-Chile.}
\email{radu.saghin\@@pucv.cl}
\address{Department of Mathematics, Michigan State University, MI, USA.}
\email{yangfa31\@@msu.edu}
\address{Departamento de Geometria, Instituto de Matem\'atica e Estat\'\i stica, Universidade Federal Fluminense, Niter\'oi, Brazil}
\email{yangjg\@@impa.br}
\begin{document}

\begin{abstract}
For a class of robustly transitive diffeomorphisms on $\TT^4$ introduced by Shub in \cite{Sh}, satisfying an additional bunching condition, we show that there exits a $C^2$ open and $C^r$ dense subset $\cU^r$, $2\leq r\leq\infty$, such that any two hyperbolic points of $g\in \cU^r$ with stable index $2$ are homoclinically related. As a consequence, every $g\in \cU^r$ admits a unique homoclinic class associated to the hyperbolic periodic points with index $2$, and this homoclinic class coincides to the whole ambient manifold. Moreover, every $g\in \cU^r$ admits at most one measure with maximal entropy, and every $g\in\cU^{\infty}$ admits a unique measure of maximal entropy.
\end{abstract}

\maketitle

\tableofcontents

\setcounter{tocdepth}{1} \tableofcontents

\section{Introduction and results}

Shub introduced in \cite{Sh} one of the first examples of robustly transitive diffeomorphisms (on $\mathbb T^4$) which are not uniformly hyperbolic. Although later Ma\~n\'e \cite{M} built a robustly transitive but non-hyperbolic example on $\TT^3$, the Shub's example still has its special interest, in particular because the center direction is two-dimensional and may admit no further domination.

In this paper we will consider a slightly more general class than the original setting of the Shub's example, a precise definition is the following.

\subsection{Shub class\label{ss.Shub example}}

\begin{definition} A diffeomorphism $f: M\to M$ is called \emph{partially hyperbolic} if the tangent bundle admits a continuous $Df$-invariant splitting $TM = E^s \oplus E^c\oplus E^u$ such that there exist continuous functions $0<\lambda_s(x)<\lambda_c^-(x)\leq\lambda_c^+(x)<\lambda_u(x)$, with $\lambda_s(x)<1<\lambda_u(x)$, satisfying the following conditions:
\begin{itemize}
\item[(1)] $\|Df(x)v^s\|\leq\lambda_s(x)$,
\item[(2)] $\lambda_c^-(x) \leq \|Df(x) v^c\| \leq \lambda_c^+(x)$,
\item[(3)] $\| Df(x) v^u\|\geq\lambda_u(x)$,
\end{itemize}
for every $x\in M$ and unit vectors $v^i \in E^i(x)$($i = s, c, u$).
\end{definition}

A partially hyperbolic diffeomorphism is called \emph{dynamically coherent} if there exist invariant foliations $\cF^{cs}$ and $\cF^{cu}$ tangent to $E^{cs} = E^c \oplus E^s$ and $E^{cu} = E^c \oplus E^u$. In this case $\cF^{cs}$ is subfoliated by the stable and central foliations $\cF^s$ and $\cF^c$, while $\cF^{cu}$ is subfoliated by the unstable and center foliations $\cF^u$ and $\cF^c$.

Let $A, B$ be two linear Anosov automormorphism on $\TT^2$ such that $1<|\lambda_B|<|\lambda_A|$, where $\lambda_A$ and $\lambda_B$ are the unstable eigenvalues of $A$ and $B$. Then $f_{A,B}(x,y): \TT^2\times \TT^2 \to \TT^2\times \TT^2 $
$$f_{A,B}(x,y)=(A(x),B(y))$$
is an Anosov automorphism, which can also be seen as a partially hyperbolic diffeomorphism with two-dimensional center bundle and one-dimensional stable and unstable bundles.

\begin{definition}
Let $\PH_{A,B}$ be the set of partially hyperbolic diffeomorphisms isotopic to $f_{A,B}$, all of them having the same dimension (i.e., one dimension) of the stable and unstable bundle, and let $\PH_{A,B}^0$ be the connected component of $\PH_{A,B}$ containing $f_{A,B}$.
\end{definition}

It is easy to see that $\PH_{A,B}^0$ is an open set of diffeomorphisms of $\TT^4$. It is known that:

\begin{proposition}\label{p.center classification}[Fisher-Potrie-Sambarino \cite{FPS}].
If $f\in \PH_{A,B}^0$, then $f$ is dynamically coherent and admits a center foliation where all central leaves are $C^1$ two-dimensional tori, and $f$ is center leaf conjugate to $f_{A,B}$.
\end{proposition}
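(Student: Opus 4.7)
The plan is to follow the Fisher--Potrie--Sambarino strategy, adapted to this two-dimensional center setting. First I would pass to the universal cover and lift $f$ to $\tf:\RR^4\to\RR^4$. Since $f$ is isotopic to the linear Anosov $f_{A,B}$, its lift satisfies $\tf=f_{A,B}+\tphi$ with $\|\tphi\|_{C^0}$ uniformly bounded. A Franks--Manning-type argument, exploiting the hyperbolicity of $f_{A,B}$ together with the boundedness of $\tphi$, produces a continuous map $\tilde H:\RR^4\to\RR^4$ at bounded distance from the identity satisfying $\tilde H\circ \tf = f_{A,B}\circ \tilde H$. This descends to a continuous semi-conjugacy $H:\TT^4\to\TT^4$ between $f$ and $f_{A,B}$ at bounded distance from the identity, which is the backbone on which the rest of the proof rests.

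Next I would establish dynamical coherence. Since $E^s$ and $E^u$ are one-dimensional, the Burago--Ivanov branching foliation construction provides $f$-invariant $cs$- and $cu$-branching foliations $\cF^{cs}_{\mathrm{br}}$ and $\cF^{cu}_{\mathrm{br}}$ tangent to $E^{cs}$ and $E^{cu}$, whose lifts to $\RR^4$ consist of properly embedded planes. To promote these branching objects to genuine foliations it suffices to show that in the lift any two $cs$-leaves either coincide or are disjoint. The key input is that each lifted $cs$-leaf stays at bounded Hausdorff distance from the corresponding affine $E^{cs}_{A,B}$-plane (via $\tilde H$), so any merge/split would produce two distinct plane-like leaves at bounded Hausdorff distance from the same affine plane; one rules this out using the transverse unstable foliation and the uniform exponential expansion along $E^u$. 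The argument for $\cF^{cu}_{\mathrm{br}}$ is symmetric.

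Given coherence, the center foliation $\cF^c=\cF^{cs}\cap\cF^{cu}$ is well-defined, and each of its lifted leaves lies at uniformly bounded Hausdorff distance from a center plane of $f_{A,B}$ (a coset of $E^c_{A,B}$, which projects to a $2$-torus in $\TT^4$). The stabilizer in the deck group of any given lifted center leaf therefore contains a rank-$2$ lattice, forcing the leaf to project to a compact surface in $\TT^4$; a graph-transform argument combined with the $C^1$ regularity of the strong stable and strong unstable foliations upgrades each such surface to a $C^1$ embedded two-dimensional torus. Restricting $\tilde H$ to a center leaf yields a degree-one map onto the corresponding center leaf of $f_{A,B}$, and descending to $\TT^4$ provides the desired center leaf conjugacy.

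The main obstacle is the dynamical coherence step, namely verifying that the Burago--Ivanov branching leaves do not actually branch. With a two-dimensional center the potential branching patterns are genuinely richer than in the three-dimensional Ma\~n\'e setting on $\TT^3$, and the argument must exploit the global rigidity supplied by the linearization $f_{A,B}$ through the semi-conjugacy $\tilde H$. A secondary delicate point is upgrading the a priori merely topological center leaves to $C^1$ tori, which requires a careful analysis of the transverse structure of the $cs$- and $cu$-foliations, together with the fact that the ambient manifold is $\TT^4$ so that deck-invariant plane-like leaves must project to tori.
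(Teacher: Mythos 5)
First, a point of reference: the paper does not prove this proposition at all --- it is quoted directly from Fisher--Potrie--Sambarino \cite{FPS} and used as a black box --- so your proposal can only be measured against the argument in that reference. In broad strokes your outline does track the FPS strategy: Franks--Manning semiconjugacy on the universal cover, Burago--Ivanov branching foliations tangent to the two codimension-one distributions $E^{cs}$ and $E^{cu}$ (modulo the usual orientability reductions), a non-branching argument, intersection of the two foliations to produce $\cF^c$, and compactness of the center leaves via the deck-group stabilizer.

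There is, however, a genuine gap at precisely the step you yourself flag as the main obstacle. Your argument uses only that $f$ is isotopic to $f_{A,B}$ (which is what yields the bounded lift $\tf=f_{A,B}+\tphi$ and the semiconjugacy $\tilde H$), whereas the proposition assumes $f\in\PH_{A,B}^0$, the \emph{connected component} of $f_{A,B}$ inside the partially hyperbolic diffeomorphisms. In \cite{FPS} this stronger hypothesis is used essentially: dynamical coherence is not extracted directly from $\tilde H$, but from showing that a suitable property (existence of invariant branching foliations whose lifted leaves are \emph{almost parallel} to the correct linear cosets) is both open and closed in $\PH_{A,B}$ and holds at $f_{A,B}$, hence propagates along the path. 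Your sketch instead asserts that ``each lifted $cs$-leaf stays at bounded Hausdorff distance from the corresponding affine $E^{cs}_{A,B}$-plane (via $\tilde H$)''; but $\tilde H$ being at bounded distance from the identity does not tell you that $\tilde H$ sends a branching $cs$-leaf into a coset of $E^{cs}_{A,B}$ --- controlling the image requires comparing the growth of intrinsic distances inside $cs$-leaves (governed by $\lambda_c^+$, which in the pointwise setting need not be dominated by $\lambda_A$) with the linear rates, and establishing this almost-parallelism is exactly the content of the open--closed argument you omit. Consequently the one-sentence disposal of merging leaves ``using the transverse unstable foliation and the uniform exponential expansion along $E^u$'' is a restatement of the difficulty rather than a proof. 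The remaining steps (compactness and $C^1$ regularity of center leaves, the leaf conjugacy induced by $\tilde H$) are plausible as sketched once coherence and almost-parallelism are in hand.
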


\begin{definition}
The \emph{Shub class} $\mathcal{SH}\subset \cup_{1<\lambda_B<\lambda_A}\PH_{A,B}^0$ is the set of partially hyperbolic diffeomorphisms $f$ of $\TT^4$ such that $f$ belongs to some $\PH_{A,B}^0$ and there exists a fixed point $p_f=f(p_f)\in\TT^4$, such that $f\mid _{\cF^c_{f}(p_f)}$ is an Anosov diffeomorphism, where $\cF^c_{f}(p_f)$ is the (fixed) center leaf passing through $p_f$. Also let
$$
\mathcal {SH}^r:=\{f\in\mathcal {SH}:\ f \hbox{ is } C^r\},\ \ r\geq 1.
$$
\end{definition}

Although this part will not be used in the proof, through analyzing the induced map on the fundamental group, it is easy to show that $f\mid _{\cF^c_f(p_f)}$ is topological conjugate to $B$. Shub proved the following.

\begin{theorem}\label{Shub}[Shub \cite{Sh}]
$\mathcal {SH}$ is $C^1$ open and every $f\in \mathcal {SH}$ is transitive.
\end{theorem}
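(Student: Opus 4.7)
\emph{Plan.} I split the argument into the $C^1$-openness of $\mathcal{SH}$ and the transitivity of each $f\in\mathcal{SH}$.

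\emph{Openness.} Each of the three defining conditions of $\mathcal{SH}$ is $C^1$-open. Partial hyperbolicity is classically $C^1$-open, and $\PH^0_{A,B}$ is by definition a connected component of an open set. For the remaining Anosov condition on the central leaf, first note that $p_f$ is a hyperbolic fixed point of $f$ in $\TT^4$: it is hyperbolic along $\cF^c_f(p_f)$ by the Anosov assumption, and hyperbolic in the transverse directions by partial hyperbolicity. Hence $p_f$ continues to a hyperbolic fixed point $p_g$ of any $C^1$-nearby $g$. By Proposition~\ref{p.center classification} together with the normal hyperbolicity theory of Hirsch--Pugh--Shub, the center foliation varies $C^1$-continuously, so $\cF^c_g(p_g)$ is a $C^1$-small perturbation of $\cF^c_f(p_f)$ as a submanifold of $\TT^4$. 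Identifying the two tori via a $C^1$-small diffeomorphism, $g|_{\cF^c_g(p_g)}$ becomes a $C^1$-small perturbation of the Anosov map $f|_{\cF^c_f(p_f)}$, and is therefore itself Anosov by the $C^1$-openness of Anosov diffeomorphisms on a fixed compact surface.

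\emph{Transitivity.} I would first establish that the unstable manifold $W^u(p_f)$ of $p_f$, viewed as a hyperbolic fixed point of $f$ in $\TT^4$, is dense in $\TT^4$; by the symmetric argument $W^s(p_f)$ is also dense, and topological transitivity of $f$ then follows from a standard inclination lemma at $p_f$ (pick a disc in an arbitrary open $U$ through a point of $W^s(p_f)\cap U$ transverse to $W^s_{\mathrm{loc}}(p_f)$; its forward iterates $C^1$-accumulate on the dense set $W^u(p_f)$ and hence eventually meet any prescribed open $V$). The tangent space to $W^u(p_f)$ at $p_f$ is the sum of $E^u(p_f)$ and the Anosov-unstable direction of $f|_{\cF^c_f(p_f)}$, so $W^u(p_f)$ is a $2$-dimensional immersed surface, $\cF^u_f$-saturated, whose intersection with $\cF^c_f(p_f)$ is precisely the unstable manifold of $p_f$ for the Anosov restriction $f|_{\cF^c_f(p_f)}$ --- a dense subset of the central $2$-torus by minimality of the unstable foliation of any Anosov diffeomorphism on $\TT^2$. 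By Proposition~\ref{p.center classification}, the quotient $\pi:\TT^4\to\TT^4/\cF^c_f\cong\TT^2$ sends $f$ to a homeomorphism conjugate to $A$ and sends $\cF^u_f$ to the unstable foliation of $A$, whose leaves are dense lines of irrational slope in $\TT^2$. Consequently the $\cF^u_f$-saturation of the entire torus $\cF^c_f(p_f)$ contains every central leaf $\cF^c_f(r)$ with $\pi(r)$ on the unstable $A$-leaf of $\pi(p_f)$, a union of leaves that is dense in $\TT^4$. Combined with the density of $W^u(p_f)\cap \cF^c_f(p_f)$ inside the central leaf, via the continuity and local surjectivity of the $\cF^u_f$-holonomy between nearby central leaves, this yields density of $W^u(p_f)$ in $\TT^4$.

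\emph{Main obstacle.} The delicate step is the density of $W^u(p_f)$ in $\TT^4$. The two ingredients --- density within the fixed central leaf (from Anosov on $\cF^c_f(p_f)$) and density of the $\cF^u_f$-saturation across central leaves (from the irrational-slope projected unstable foliation on $\TT^2$) --- are each straightforward, but combining them into full density in $\TT^4$ requires controlling the $\cF^u_f$-holonomy between central leaves (which is the classical technical point in arguments of this flavour). Everything else --- persistence of the hyperbolic fixed point, $C^1$-continuity of the central foliation, and the inclination lemma conclusion --- is standard.
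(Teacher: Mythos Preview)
The paper does not actually prove this theorem: after stating it, the authors simply write ``Shub proved this result for some specific examples, but the proof can be adapted for the Shub class of diffeomorphisms with minor modifications.'' So there is nothing in the paper to compare against beyond the citation to \cite{Sh}.

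Your sketch is a correct reconstruction of the standard Shub argument. The openness part is fine as written; the only point worth tightening is that you need $C^1$-persistence not of the whole center foliation but of the single compact fixed leaf $\cF^c_f(p_f)$, which is exactly what HPS normal hyperbolicity gives for a compact invariant manifold.

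For transitivity, your outline is also correct, and you have correctly located the one nontrivial step: passing from density of $W^u(p_f)\cap\cF^c_f(p_f)$ in the fixed leaf, together with density of $\pi(\cF^{cu}_f(p_f))$ in $\TT^2$, to density of $W^u(p_f)$ in $\TT^4$. What makes this work is that inside $\cF^{cu}_f(p_f)$ the strong unstable foliation is \emph{complete}: every $\cF^u_f$-leaf meets every center leaf, so the unstable holonomy $h^u:\cF^c_f(p_f)\to\cF^c_f(r)$ is a globally defined homeomorphism between compact tori for each center leaf $\cF^c_f(r)\subset\cF^{cu}_f(p_f)$, and hence carries the dense set $W^u(p_f)\cap\cF^c_f(p_f)$ to a dense subset of $\cF^c_f(r)$. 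Completeness follows from the leaf conjugacy to $f_{A,B}$ in Proposition~\ref{p.center classification} (or directly from the quasi-isometry of $\cF^u_f$ inside $\cF^{cu}_f$). You allude to this with ``continuity and local surjectivity of the $\cF^u_f$-holonomy,'' but local surjectivity alone is not enough when the target center leaf lies far along the $A$-unstable line; you really need the global holonomy, and it is worth saying so explicitly.
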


Shub proved this result for some specific examples, but the proof can be adapted for the Shub class of diffeomorphisms with minor modifications. In this article we consider the class of Shub diffeomorphisms which also satisfy some bunching conditions.

\begin{definition}
The \emph{bunched Shub class} $\mathcal {SH}^r_b$ is the set of partially hyperbolic diffeomorphisms $f\in\mathcal {SH}^r$ which also satisfy the following bunching conditions:
\begin{enumerate}
\item[(a)]
Global bunching:
\begin{equation}\label{global}
\lambda_s(x)<\frac{\lambda_c^-(x)}{\lambda_c^+(x)}\leq\frac{\lambda_c^+(x)}{\lambda_c^-(x)}<\lambda_u(x),\ \ \forall x\in\TT^4;
\end{equation}
\item[(b)]
Stronger local bunching at the fixed center leaf $\cF^c_f(p_f)$:
\begin{equation}\label{local1}
\lambda_s(x)<\left(\lambda_c^-(x)\right)^2\leq\left(\lambda_c^+(x)\right)^2<\lambda_u(x),\ \ \forall x\in\cF^c_f(p_f),
\end{equation}
and
\begin{equation}\label{local2}
\lambda_s(x)<\frac{\lambda_c^-(x)}{\left(\lambda_c^+(x)\right)^2}\leq\frac{\lambda_c^+(x)}{\left(\lambda_c^-(x)\right)^2}<\lambda_u(x),\ \ \forall x\in\cF^c_f(p_f).
\end{equation}
\end{enumerate}
\end{definition}

Clearly $\mathcal {SH}^r_b$ is a $C^1$ open set.

\begin{remark}\label{bunch}
The condition (\ref{global}) implies (see \cite{PSW}) that if $f$ is $C^2$ then the stable and unstable bundles are $C^1$ when restricted to the center-stable and center-unstable leaves, and as a consequence the strong stable and strong unstable holonomies between the center leaves are of class $C^1$ (when restricted to the center-stable respectively center-unstable leaves). We will see later that in fact these holonomies depend continuously in the $C^1$ topology with respect to the points (or the center leaves) and with respect to the map $f$ (in the $C^2$ topology).
\end{remark}

\begin{remark}
The condition  (\ref{local1}) is the standard 2-bunching condition, and \cite{HPS} implies that if $f$ is $C^2$ then $\cF^c_f(p_f)$, $\cF^{cs}_f(p_f)$ and $\cF^{cu}_f(p_f)$ are of class $C^2$. If the central bounds are symmetric, of $\lambda_c^-\lambda_c^+=1$, then it is equivalent to the global bunching condition (\ref{global}).

The condition (\ref{local2}) implies that if $f$ is $C^3$ then the strong stable and strong unstable bundles and (sub)foliations are of class $C^2$ when restricted inside the center-stable respectively center-unstable leaves of the fixed point $p_f$ (see \cite{PSW}).
\end{remark}

\subsection{Results}

The homoclinic intersections between hyperbolic periodic points were first observed by Poincar\'e, and since then, they play an important role in the theory of dynamical systems. Smale \cite{Sm} used them in order to define homoclinic classes.

\begin{definition}
Given two hyperbolic periodic points $p, q$ of the diffeomorphism $f$, with the same stable index, we say that they are \emph{homoclinically related} if their stable and unstable manifolds intersect transversally:
\begin{equation}
W^s(p)\pitchfork W^u(q)\neq \emptyset \;\;\text{ and }\;\; W^s(q)\pitchfork W^u(p)\neq \emptyset.
\end{equation}We say that $\Orb(p)$ and $\Orb(q)$ are \emph{homoclinically related} if:
\begin{equation}\label{eq.homoclinic}
W^s(\Orb(p))\pitchfork W^u(\Orb(q))\neq \emptyset \;\;\text{ and }\;\; W^s(\Orb(q))\pitchfork W^u(\Orb(p))\neq \emptyset.
\end{equation}
This is an equivalence relation between hyperbolic periodic orbits. The \emph{homoclinic class of $\Orb(p)$}, $HC(\Orb(p))$, is the closure of the equivalence class of $\Orb(p)$.
\end{definition}

If a diffeomorphism has no dominated splitting of index 2, it seems unexpected that any two hyperbolic points of stable index $2$ are homoclinically related to each other.
Indeed, the sizes of stable and unstable manifolds of the hyperbolic periodic points are nonuniform, and the intersection in \eqref{eq.homoclinic}
can be empty. On the other hand, even if the intersection is non-empty, the intersection may not be transverse, because of the lack of domination (see Pujals-Sambarino \cite{PS}, Lan Wen \cite{W}).

The main result of this paper is the following.

\begin{main}\label{main.homoclinic intersection}
For any $2\leq r\leq\infty$, there exists a $C^2$ open and $C^r$ dense subset $\cU^r\subset\mathcal {SH}^r_b$, such that for any $f\in\cU^r$, any two hyperbolic points of $f$ with stable index $2$ are homoclinically related.
\end{main}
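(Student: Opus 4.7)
The plan is to fix a distinguished hyperbolic periodic point $p$ inside the fixed central leaf $\cF^c_f(p_f)$ and to show that, for a $C^r$-dense $C^2$-open set of $f\in\mathcal{SH}^r_b$, every other hyperbolic periodic point $q$ of stable index $2$ is homoclinically related to $p$. Taking $p$ inside the Anosov restriction $f\mid_{\cF^c_f(p_f)}$ is convenient because the stronger bunching conditions~(\ref{local1}) and~(\ref{local2}) force $\cF^c_f(p_f)$, $\cF^{cs}_f(p_f)$, and $\cF^{cu}_f(p_f)$ to be $C^2$ and the strong stable/unstable bundles to be $C^2$ along these leaves, so the one-dimensional $c$-stable and $c$-unstable manifolds of $p$ are $C^2$ curves dense in the fixed central leaf by transitivity of the Anosov restriction. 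Moreover, by~(\ref{global}) and Remark~\ref{bunch}, the strong stable and strong unstable holonomies between center leaves are $C^1$ and vary $C^1$-continuously when $f$ is perturbed in the $C^2$ topology, an ingredient I will need repeatedly.

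The first main step is the production of intersections, not yet transverse. Using Proposition~\ref{p.center classification}, the center-leaf projection semi-conjugates $f$ to $f_{A,B}$, and the transitivity given by Theorem~\ref{Shub} together with a standard saturation argument yields that the strong unstable and strong stable foliations have dense leaves in $\TT^4$. In particular, given an index-$2$ hyperbolic periodic point $q$, both $W^{uu}(q)$ and $W^{ss}(q)$ accumulate on $\cF^c_f(p_f)$, and the accumulating pieces can be paired with the dense $c$-stable (respectively $c$-unstable) manifold of $p$ inside the fixed central leaf to produce non-empty intersections $W^u(q)\cap W^s(p)$ and $W^s(q)\cap W^u(p)$.

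The hardest step, and the main obstacle, is to upgrade these intersections to genuinely transverse ones. Because $E^c$ admits no further dominated splitting, the two-dimensional tangent plane of $W^u(q)$ at an accumulation point in $\cF^c_f(p_f)$ need not align with $E^{cu}$, so transversality with the two-dimensional $W^s(p)$ does not come automatically. To resolve this I rely on the bunching to perform a controlled \emph{holonomy perturbation}: a carefully localized $C^r$ perturbation of $f$, supported away from $\Orb(p)\cup\Orb(q)$ but hitting a strong unstable segment landing near $\cF^c_f(p_f)$, translates into a $C^1$ deformation of the $ss$-holonomy from that segment into the fixed central leaf, again thanks to Remark~\ref{bunch} and the $C^2$ hyperbolicity along $\cF^c_f(p_f)$. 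A Franks-type argument then puts the candidate intersections into general position in the center direction and produces transversality in both the center-stable and center-unstable directions. This perturbation mechanism is the heart of the argument and the place where the bunched class $\mathcal{SH}^r_b$, rather than just $\mathcal{SH}^r$, is essential.

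Finally, a Baire-category argument closes the theorem. Enumerate the countably many index-$2$ hyperbolic periodic orbits of a Kupka--Smale representative of $\mathcal{SH}^r_b$ and apply the perturbation step successively to each pair $(p,q_n)$; at each stage only finitely many transverse heteroclinic intersections are needed, and each is $C^1$-robust, hence preserved under sufficiently small $C^2$ perturbations. Intersecting the resulting $C^r$-dense open sets and taking the $C^2$-interior yields the desired $C^2$-open, $C^r$-dense subset $\cU^r$ in which every two hyperbolic periodic points of stable index $2$ are homoclinically related to $p$, and hence to each other.
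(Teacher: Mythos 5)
Your closing Baire-category step is where the argument breaks down, and it breaks in a way that cannot be patched within your framework. Intersecting the countably many $C^r$-dense open sets obtained by treating the pairs $(p,q_n)$ one at a time yields only a residual set; the $C^2$-neighborhood on which a given transverse heteroclinic intersection persists shrinks as the period of $q_n$ grows, so the intersection has no reason to contain any open set, and ``taking the $C^2$-interior'' of a dense $G_\delta$ will in general produce the empty set. Worse, the theorem quantifies over \emph{all} index-$2$ hyperbolic periodic points of the perturbed map, and every perturbation you perform creates new periodic orbits that were not in your enumeration of the orbits of the original Kupka--Smale representative; an orbit-by-orbit perturbation scheme can therefore never certify the conclusion for an open set of maps. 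The ``Franks-type argument'' is also suspect here (Franks' lemma is a $C^1$ tool and does not give $C^r$, or even $C^2$, perturbations), but the enumeration issue is the fatal one.

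The paper circumvents exactly this obstruction by making the generic condition a condition on $f$ alone rather than on pairs of periodic orbits: $f\in\cU^r_s$ iff for every $x\in\cF^c(p)$ there is a homoclinic point $q$ of the fixed Anosov leaf with $\tilde g_q(x)\neq\pm\tilde v^s(x)$, where $\tilde g_q$ is the image of the stable direction of $f\mid_{\cF^c(p)}$ under the composed unstable-after-stable homoclinic holonomy. By compactness this reduces to finitely many $q_i$, whence $C^2$ openness; density is proved by a single perturbation argument (Implicit Function Theorem plus Sard) applied to rotational perturbations supported near the homoclinic leaves $\cF^c(q_i)$. Then, for \emph{any} $f$ satisfying this condition and \emph{any} index-$2$ hyperbolic point $x$, the relation to $p$ follows from a soft dichotomy with no further perturbation: $\cF^u(x)$ accumulates on $\cF^c(p)$ at points $p_n$, and either at some $p_n$ the pushed-forward weak unstable direction of $x$ differs from $\pm$ the pushed-forward stable direction of the Anosov leaf --- giving a transverse intersection via density of $W^s(p)$ in the weak stable foliation --- or in the limit it aligns with $\tilde v^s(p_0)$, in which case the homoclinic point $q$ with $\tilde g_q(p_0)\neq\pm\tilde v^s(p_0)$ supplies a second family of accumulation points $q_n$ where alignment must fail, a contradiction. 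If you want to salvage your approach, you need to replace the per-orbit perturbation by some such uniform, finitely-checkable condition on the holonomy structure of the fixed leaf.
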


As a consequence, any diffeomorphism $f\in \cU^r$ admits a unique homoclinic class associated to the hyperbolic periodic points of index $2$. Denote by $p_f$ a hyperbolic fixed point of $f\in \cU^r$.

\begin{maincor}\label{maincor.suppporthomoclinic}
For any $f\in \cU^r$, $f$ admits a unique homoclinic class $H(p_f,f)$ associate to the hyperbolic periodic points of index $2$, and the homoclinic class coincides to the ambient manifold.
\end{maincor}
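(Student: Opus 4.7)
The plan is to derive the corollary from Theorem~\ref{main.homoclinic intersection}, Shub's transitivity (Theorem~\ref{Shub}), and the minimality of the strong unstable foliation $\cF^u$ for the Shub class, which is one of the ingredients of Shub's transitivity argument.

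First I would observe that $p_f$ is itself a hyperbolic fixed point of $f$ of stable index $2$: the Anosov diffeomorphism $f|_{\cF^c_f(p_f)}$ splits $E^c(p_f)$ into one-dimensional stable and unstable subspaces, which together with the bundles $E^s$ and $E^u$ give $\dim W^s(p_f)=\dim W^u(p_f)=2$. Periodic points of $f|_{\cF^c_f(p_f)}$ are moreover dense in that center leaf, and each such periodic point is a hyperbolic periodic point of $f$ of stable index $2$. By Theorem~\ref{main.homoclinic intersection}, all such orbits lie in a single equivalence class for the homoclinic relation, whose closure is by definition the homoclinic class $H(p_f,f)$; this yields the uniqueness part of the statement. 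Applying Theorem~\ref{main.homoclinic intersection} to $p_f$ and a second index-$2$ orbit in $\cF^c_f(p_f)$, and using the $\lambda$-lemma at the second orbit to propagate intersections, one also obtains a genuine transverse self-homoclinic intersection $W^s(p_f)\pitchfork W^u(p_f)\neq\emptyset$.

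For the equality $H(p_f,f)=\TT^4$ I would combine two standard ingredients. First, the Smale--Birkhoff horseshoe construction plus the $\lambda$-lemma implies that for any hyperbolic periodic point $p$ with a transverse homoclinic intersection one has $\overline{W^u(p)}\subseteq H(p,f)$, so in our setting $\overline{W^u(p_f)}\subseteq H(p_f,f)$. Second, the strong unstable foliation $\cF^u$ is minimal for every $f\in\mathcal{SH}$: by the leaf conjugacy to $f_{A,B}$ from Proposition~\ref{p.center classification}, each leaf of $\cF^u$ projects onto an $A$-unstable manifold in the quotient $\TT^4/\cF^c\cong\TT^2$, which is dense because the unstable slope of $A$ is irrational, while the Anosov behavior of $f|_{\cF^c_f(p_f)}$ forces the successive returns of $\cF^u(p_f)$ to that center leaf to be dense within it. Since $\cF^u(p_f)\subseteq W^u(p_f)$, we conclude
\[
\TT^4=\overline{\cF^u(p_f)}\subseteq \overline{W^u(p_f)}\subseteq H(p_f,f),
\]
and the reverse inclusion is trivial.

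The main technical point, where the specific Shub-class hypotheses are really used rather than mere partial hyperbolicity, is the minimality of $\cF^u$: upgrading density on the base $\TT^2$ to density inside the four-torus requires the Anosov dynamics on the fixed center leaf to spread the intersections in the center direction, which is essentially Shub's transitivity mechanism. The remaining steps---the Smale--Birkhoff horseshoe argument and extracting a transverse self-homoclinic intersection of $p_f$ from Theorem~\ref{main.homoclinic intersection}---are formal.
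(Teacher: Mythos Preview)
Your argument contains two genuine gaps, and the second one is exactly the place where the corollary requires work beyond Theorem~\ref{main.homoclinic intersection}.

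First, the assertion that $\cF^u$ is minimal for every $f\in\mathcal{SH}$ is false. The linear map $f_{A,B}$ itself lies in $\mathcal{SH}$, yet $\cF^u(p_f)=W^u_A(x_0)\times\{y_0\}$ has closure $\TT^2\times\{y_0\}\subsetneq\TT^4$. Your justification (dense projection to $\TT^2$ plus Anosov dynamics on the fixed center leaf) applies verbatim to $f_{A,B}$, so it cannot yield minimality. The strong leaf $\cF^u(p_f)$ meets $\cF^c(p_f)$ only at $p_f$, so there are no ``successive returns'' to spread around. What is true is that the two-dimensional $W^u(p_f)$ is dense (it is $\cF^u$-saturated and contains the dense weak-unstable curve in $\cF^c(p_f)$, hence contains the dense leaf $\cF^{cu}(p_f)$), but this does not finish the proof because of the second gap.

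Second, the implication ``$p$ has a transverse homoclinic point $\Rightarrow\overline{W^u(p)}\subseteq H(p,f)$'' is false in general: for a horseshoe on $S^2$ coexisting with a sink, $\overline{W^u(p)}$ contains the sink while $H(p)$ is just the horseshoe. The $\lambda$-lemma gives the opposite inclusion $H(p)\subseteq\overline{W^u(p)}$. To obtain $\overline{W^u(p_f)}\subseteq H(p_f,f)$ you must show that \emph{transverse} intersections $W^s(p_f)\pitchfork W^u(p_f)$ are dense in $W^u(p_f)$, and this cannot be extracted from Smale--Birkhoff alone. The paper proves density of such transverse intersections in $\TT^4$ directly: given $x\in W^u(p_f)\cap\cF^{cs}(p_f)$ in an open set, either the pushed-forward weak-stable direction at $x$ is already transverse to $W^u(p_f)$, or one iterates forward to a point $p_0\in\cF^c(p_f)$ and invokes the defining condition of $\cU^r$ (the existence of a homoclinic leaf $q$ with $\tilde g_q(p_0)\neq\pm\tilde v^s(p_0)$) to produce a nearby point where transversality holds. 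Thus the $\cU^r$ hypothesis is used a second time, beyond its role in Theorem~\ref{main.homoclinic intersection}, and your approach does not invoke it where it is needed.
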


For a continuous potential $\phi$ and a continuous map $f$, an $f$-invariant probability measure $\mu$ is called an \emph{equilibrium measure} for the potential $\phi$, if
$$h_\mu(f) + \int \phi d\mu = P_{top}(\phi),$$
where $P_{top}(\phi) := \sup_{\nu\in \cM_e(f)}\{h_\nu(f) + \int \phi d\nu\}$.

The equilibrium states do not necessarily exist. Assuming entropy expansiveness, Bowen \cite{Bo} proved the equilibrium states do exist. It was shown by Liao, Viana and Yang \cite{LVY} that any diffeomorphism away from homoclinic tangencies is entropy expansive. Yomdin \cite{Y}, (see also Buzzi \cite{Bu}) proved also that for any $C^\infty$ diffeomorphism, equilibrium states always exist.

The uniqueness of equilibrium states is a more subtle problem. Recently, Climenhaga and Thompson \cite{CT} (see also \cite{PYY}) gave a criterion based on Bowen property and specification. Another method used by Buzzi, Crovisier and Sarig \cite{BCS} (see also Ben Ovadia \cite{Be1,Be2}) is based on the use of the homoclinic class of measures:

\begin{definition}\label{df.homoclinic measures}
Suppose $f$ is a $C^r$ diffeomorphism for some $r>1$. For two ergodic hypperbolic measures $\mu_1$ and $\mu_2$ of $f$, we write $\mu_1 \preceq \mu_2$ iff there exist measurable sets $A_1, A_2 \subset M$ with $\mu_i(A_i) > 0$ such that for any $x_1\in A_1$ and $x_2\in A_2$, the manifolds $W^u(x_1)$ and $W^s
(x_2)$ have a point of transverse intersection.

$\mu_1$, $\mu_2$ are \emph{homoclinically related} if $\mu_1 \preceq \mu_2$ and $\mu_2\preceq \mu_1$. We write $\mu_1 \overset{h}{\sim}
\mu_2$. The set of ergodic measures homoclinically related to a hyperbolic ergodic measure $\mu$ is called the \emph{measured homoclinic class} of $\mu$.
\end{definition}

\begin{remark}\label{rk.homoclinic measures}
The homoclinic relation is an equivalence relation, moreover, two atomic measures supported on two periodic orbits are homoclinically related
if and only if the two periodic orbits are hyperbolic and homoclinically related.
\end{remark}

We have the following Theorem.

\begin{main}\label{main.measured homoclinic }
For any $f\in \cU^r$, all the hyperbolic ergodic measures of index $2$ are homoclinically related. Let $\phi:\TT^4\rightarrow\mathbb R$ be any H\"older potential function with
$\max_{x,y\in \TT^4}\|\phi(x)-\phi(y)\|<\log \lambda_B$, where $f\in\PH_{A,B}^0$, then $f$ admits at most one equilibrium state for the potential $\phi$. In particular, every $f\in \cU^r$ admits at most one measure of maximal entropy.
\end{main}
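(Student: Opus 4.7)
The plan is to combine Theorem~\ref{main.homoclinic intersection} with the uniqueness of equilibrium states within a measured homoclinic class proved by Buzzi--Crovisier--Sarig~\cite{BCS} and Ben Ovadia~\cite{Be1,Be2}. As a first step I would place every hyperbolic ergodic measure $\mu$ of stable index~$2$ into a single measured homoclinic class $\cH$. Katok's closing lemma produces a hyperbolic periodic orbit $\Orb(p)$ of index~$2$ shadowing $\mu$-typical orbits on a Pesin block of positive $\mu$-measure; the unstable manifolds of Pesin block points transversally intersect stable manifolds of $\Orb(p)$ and vice versa, which by Definition~\ref{df.homoclinic measures} yields $\mu\overset{h}{\sim}\delta_{\Orb(p)}$. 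Theorem~\ref{main.homoclinic intersection} asserts that all such periodic orbits of $f\in\cU^r$ are mutually homoclinically related; by Remark~\ref{rk.homoclinic measures} this is equivalent to their atomic measures being related, and transitivity of $\overset{h}{\sim}$ then collects every hyperbolic ergodic measure of index~$2$ into a single class $\cH$.

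Next I would show that every equilibrium state of $\phi$ is ergodic and hyperbolic of index~$2$. By ergodic decomposition, and using that $\mu$-a.e.\ ergodic component of an equilibrium state is itself an equilibrium state, we may take $\mu$ ergodic. The variational principle combined with $\mathrm{Var}(\phi)<\log\lambda_B$ gives
\[
h_\mu(f)>h_{top}(f)-\log\lambda_B\geq\log\lambda_A,
\]
where the last inequality uses $h_{top}(f)\geq\log\lambda_A+\log\lambda_B$: by Proposition~\ref{p.center classification} the quotient dynamics $\bar f$ on $\TT^4/\cF^c\cong\TT^2$ is topologically conjugate to $A$, while $f|_{\cF^c_f(p_f)}$ is Anosov of entropy $\log\lambda_B$, and combining these via the Ledrappier--Walters formula gives the bound. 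Writing $\pi:\TT^4\to\TT^2$ for the quotient and noting $h_{\pi_*\mu}(\bar f)\leq\log\lambda_A$, the relative variational principle yields
\[
h_\mu(f)=h_{\pi_*\mu}(\bar f)+h_\mu(f\mid\pi^{-1}\cB_{\TT^2}),
\]
and the fiber entropy is bounded above by the sum of positive central Lyapunov exponents of $\mu$ (fibrewise Ruelle inequality). If both central exponents were nonpositive the fiber entropy would vanish, forcing $h_\mu(f)\leq\log\lambda_A$ and contradicting the lower bound; applying the same argument to $f^{-1}$ rules out both central exponents being nonnegative. Hence $\mu$ is hyperbolic of stable index~$2$ and, by the first step, lies in $\cH$.

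The uniqueness of equilibrium states within $\cH$ (\cite{BCS,Be1,Be2}) together with ergodic decomposition yields at most one equilibrium state of $\phi$; setting $\phi\equiv 0$ gives the statement on measures of maximal entropy.

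The main obstacle is the second step, specifically the application of the Ledrappier--Walters decomposition and the fibrewise Ruelle inequality in this partially hyperbolic setting. One must check that $\TT^4/\cF^c$ is a genuine topological factor (which follows from the compactness of $\cF^c$-leaves in Proposition~\ref{p.center classification}) and that the disintegrations of $\mu$ along $\cF^c$ are regular enough for Ruelle's inequality to apply on the $2$-dimensional fibers. The $C^2$ regularity of $\cF^c_f(p_f)$ given by the bunching condition~\eqref{local1} should provide the smoothness needed to justify these steps.
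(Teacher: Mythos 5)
Your overall architecture coincides with the paper's: normalize the potential, bound the entropy of any equilibrium state from below by $\log\lambda_A$, deduce hyperbolicity of index $2$, pass to a periodic orbit via Katok's theorem, invoke Theorem~\ref{main.homoclinic intersection} and Remark~\ref{rk.homoclinic measures} to collapse everything into one measured homoclinic class, and conclude with the Buzzi--Crovisier--Sarig/Ben Ovadia criterion (Proposition~\ref{p.criterion}). The first and third steps of your proposal are essentially the paper's proof. The genuine gap is in your second step, precisely where you yourself flag the obstacle. The paper does \emph{not} prove ``large entropy implies index $2$'' via an Abramov--Rokhlin decomposition over $\TT^4/\cF^c$ plus a fibrewise Ruelle inequality. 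It instead uses the partial entropy $h_\mu(f,\cF^u)$ along the strong unstable foliation: by Tahzibi--Yang (Lemma~\ref{l.upuentropy}), $h_\mu(f,\cF^u)\leq h_{top}(A)=\log\lambda_A$ because $\pi^c_f$ sends unstable leaves of $f$ to unstable leaves of $A$; and if $\lambda_1^c\leq 0$ the Ledrappier--Young formula gives $h_\mu(f)=h_\mu(f,\cF^u)$, yielding the contradiction (Lemma~\ref{l.largeentropy}). A ``relative Ruelle inequality'' bounding the conditional entropy over the center quotient by the positive central exponents is not a citable off-the-shelf result here: the factor map is only continuous, the center bundle is only H\"older, and the $C^2$ regularity from \eqref{local1} applies only to the single fixed leaf $\cF^c_f(p_f)$, not to the foliation. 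As written, this step of your argument is not justified; the partial-entropy route is the way to close it.

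A secondary but real issue is your justification of $h_{top}(f)\geq\log\lambda_A+\log\lambda_B$. Quotienting by $\cF^c$ only exhibits $A$ as a factor, which gives $h_{top}(f)\geq\log\lambda_A$; to recover the extra $\log\lambda_B$ via Ledrappier--Walters over that quotient you would need the fiber topological entropy $h_{top}(f,\pi^{-1}(y))\geq\log\lambda_B$ for almost every fiber with respect to the maximal-entropy measure of $A$, whereas the Anosov behaviour is only hypothesized on the single fixed leaf. The paper instead uses the Franks--Manning semiconjugacy $h\circ f=f_{A,B}\circ h$ on all of $\TT^4$, so that the full Anosov map $f_{A,B}$ (entropy $\log\lambda_A+\log\lambda_B$) is a topological factor of $f$; this gives \eqref{eq.globalentropy} directly. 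You should replace your quotient argument by this semiconjugacy.
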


A direct consequence of \cite{Y,Bu} is the following.

\begin{corollary}
Every $f\in\cU^{\infty}\cap\PH_{A,B}^0$ admits a unique equilibrium state for every H\"older potential satisfying $\max_{x,y\in \TT^4}\|\phi(x)-\phi(y)\|<\log \lambda_B$. In particular every $f\in\cU^{\infty}$ admits a unique measure of maximal entropy.
\end{corollary}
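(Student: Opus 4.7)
The plan is short because the corollary advertises itself as a direct consequence of \cite{Y,Bu} combined with Theorem \ref{main.measured homoclinic }: the two cited papers provide existence of equilibrium states, while Theorem \ref{main.measured homoclinic } supplies at-most uniqueness, and together they yield exactly one.

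First, fix $f\in\cU^{\infty}\cap\PH_{A,B}^0$ and a H\"older potential $\phi$ with $\max_{x,y\in\TT^4}\|\phi(x)-\phi(y)\|<\log\lambda_B$. By Yomdin's theorem (and Buzzi's refinements), every $C^{\infty}$ diffeomorphism of a compact manifold has upper semi-continuous metric entropy $\mu\mapsto h_{\mu}(f)$ on the space $\cM(f)$ of $f$-invariant Borel probability measures. Since $\phi$ is continuous, $\mu\mapsto\int\phi\,d\mu$ is weak-$*$ continuous, so the free energy $\mu\mapsto h_{\mu}(f)+\int\phi\,d\mu$ is upper semi-continuous on the compact space $\cM(f)$ and attains its supremum. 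This produces at least one equilibrium state for $(f,\phi)$.

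Second, Theorem \ref{main.measured homoclinic } applied to the same $f$ and $\phi$ guarantees that there is at most one equilibrium state for the potential $\phi$ (the hypothesis on the oscillation of $\phi$ relative to $\log\lambda_B$ is exactly the one required there). Combining upper semi-continuous existence with this uniqueness, the equilibrium state is unique.

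Third, to obtain the statement about the measure of maximal entropy for arbitrary $f\in\cU^{\infty}$, take $\phi\equiv 0$, whose oscillation is $0<\log\lambda_B$ since $|\lambda_B|>1$. Since $\cU^{\infty}\subset\mathcal{SH}^{\infty}\subset\bigcup_{A,B}\PH_{A,B}^{0}$, every $f\in\cU^{\infty}$ lies in some connected component $\PH_{A,B}^0$, so the preceding paragraph applies and yields a unique MME. No new argument is needed; the only point worth checking is that $\cU^{\infty}\subset\mathcal{SH}^{\infty}_b$ is indeed contained in a union of the components $\PH_{A,B}^0$, which is immediate from the definition of the Shub class. There is no real obstacle here—the work is entirely in Theorems \ref{main.homoclinic intersection} and \ref{main.measured homoclinic } and in \cite{Y,Bu}.
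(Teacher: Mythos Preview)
Your proposal is correct and matches the paper's approach exactly: the paper states the corollary as ``a direct consequence of \cite{Y,Bu}'', meaning Yomdin--Buzzi provide existence of equilibrium states for $C^{\infty}$ maps via upper semi-continuity of entropy, while Theorem~\ref{main.measured homoclinic } supplies the at-most-one part. Your write-up spells out precisely this combination, including the specialization to $\phi\equiv 0$ for the measure of maximal entropy.
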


For Shub's example, some similar results were obtained under some extra assumptions. For instance, by Newhouse, Young \cite{NY} and Carvalho, P\'erez \cite{CP} with the extra assumption that within the center foliation there exists a one-dimensional invariant sub-foliation, and by \'{A}lvarez \cite{A} assuming that the center bundle admits a further dominated splitting.

\section{Preliminaries}

\subsection{Stable and unstable holonomies between center leaves.}

As we mentioned before, the condition \ref{global} implies that the holonomies between the center leafs are uniformly $C^1$. In fact there exists a continuity of the holonomies in the $C^1$ topology. If $y\in\cF^u_f(x)$ let us denote by $h_{f,x,y}^u:\cF^c_f(x)\rightarrow\cF^c_f(y)$ the unstable holonomy between the two center leaves. Since it is of class $C^1$, the derivative $Dh_{f,x,y}^u$ induces a continuous map between the unit tangent bundles $Dh_{f,x,y*}^u:T^1\cF^c_f(x)\rightarrow T^1\cF^c_f(y)$.

\begin{lemma}\label{le:contc1}
$Dh_{f,x,y*}^u$ is continuous with respect to $f\in \mathcal{SH}^2_b$ (the $C^2$ topology) and $(x,y)\in\TT^4,\ \ y\in\cF^u_f(x)$. The same holds for the stable holonomy.
\end{lemma}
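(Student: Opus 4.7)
The plan is to reduce the lemma to $C^0$-continuity, jointly in $f$ and the plaque, of the derivative $D_cA_f$ of a ``slope function'' $A_f$ that describes the unstable line field inside center-unstable leaves; this function is $C^1$ by \eqref{global} and \cite{PSW}, and its center-derivative determines $Dh^u$ through the variational equation.

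Fix local $C^1$ coordinates $(u,c)$ on $\cF^{cu}_f(x)$ adapted to $E^u\oplus E^c$ at the base point, with $u\in\RR$ and $c\in\RR^2$. By \cite{PSW} applied to the global bunching \eqref{global}, the unstable leaves inside $\cF^{cu}_f(x)$ are graphs $c=c_f(u;c_0)$ satisfying $\tfrac{dc}{du}=A_f(u,c)$ for a $C^1$ $\RR^2$-valued function $A_f$. The unstable holonomy from $\{u=0\}$ to $\{u=u_0\}$ sends $c_0\mapsto c_f(u_0;c_0)$, and by the variational equation its derivative in $c_0$ equals the matrix time-ordered exponential $\mathcal{P}\exp\!\bigl(\int_0^{u_0}D_cA_f(u,c_f(u;c_0))\,du\bigr)$. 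Therefore the lemma reduces to $C^0$-continuity of $D_cA_f$, jointly in $(f,x)$. Moreover, using the equivariance $h^u_{f,f^{-n}(x),f^{-n}(y)}=f^{-n}\circ h^u_{f,x,y}\circ f^n$, the backward contraction of unstable distances under $f^{-n}$, and $C^1$-continuity of $f$, it suffices to treat $y$ in a uniformly small unstable neighborhood of $x$, so only local plaques of bounded size enter.

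The $C^1$-regularity of $A_f$ is produced in the HPS--PSW framework as the fixed point of a fiber contraction $T_f$ on a Banach space of continuous sections of the $1$-jet bundle of the unstable line field over $\cF^{cu}_f(x)$, whose contraction rate is governed precisely by \eqref{global}. This rate is uniform on a small $C^2$-neighborhood $\mathcal{V}$ of any $f_0\in\mathcal{SH}^2_b$ (openness of \eqref{global}), and $T_f$ depends continuously on $f$ in the $C^2$-topology (it involves $Df$ and its action on a Grassmannian bundle). Using the leaf conjugacy of Proposition \ref{p.center classification} to identify center-unstable plaques of $f\in\mathcal{V}$ with those of $f_0$, we put all $T_f$ on a common Banach space, giving continuity of $f\mapsto T_f$. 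The parametric contraction mapping principle then yields $C^0$-continuity of the fixed point $D_cA_f$, proving the lemma for the unstable holonomy; the stable case is symmetric.

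The main technical obstacle is this parametric contraction setup: organizing the center-unstable plaques of varying $f$ into a common Banach space where the $T_f$ form a continuous family. The leaf conjugacy is only $C^0$, but this suffices for $C^0$-convergence of fixed points, and hence for $C^0$-convergence of $Dh^u$ by Gr\"onwall-type continuity of the time-ordered exponential. Uniform choice of H\"older moduli and plaque sizes on $\mathcal{V}$ follows from openness of \eqref{global}.
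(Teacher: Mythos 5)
Your argument is essentially correct, but it takes the alternative route that the paper explicitly mentions (in the remark following the lemma) and declines to pursue: since $E^u$ is one\--dimensional, you treat the unstable curves inside a center\--unstable leaf as integral curves of a $C^1$ slope field $A_f$ and recover $Dh^u$ from the variational equation, whereas the paper constructs the projectivized holonomy directly as the unique bounded invariant section of a graph\--transform operator on the center circle bundle $S_f\subset T^1\TT^4$ over the projectivized derivative $Df_*$. The two proofs share the same engine --- an invariant\--section/fiber\--contraction argument whose rate is $\frac{\lambda_c^+}{\lambda_c^-\lambda_u}<1$, i.e.\ exactly \eqref{global} --- but they apply it to different objects: you apply it at the level of the $1$\--jet of the invariant line field $E^u|_{\cF^{cu}}$ (the derivative $D_cA_f$) and then integrate, while the paper applies it to the holonomy itself. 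Your route buys a conceptually transparent reduction (``continuity of the derivative of the unstable line field implies continuity of the holonomy derivative'' via Gr\"onwall), but it is tied to $\dim E^u=1$; the paper's construction avoids ever discussing $D_cA_f$ and, as its closing remark notes, adapts to higher\--dimensional unstable bundles and to $C^{1+\alpha}$ regularity.

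Three places where you are under\--detailed. First, the holonomy $h^u_{f,x,y}$ runs between \emph{center plaques}, not between the coordinate slices $\{u=\mathrm{const}\}$ of a chart merely adapted at the base point, so the time\--ordered exponential must be pre\-- and post\--composed with the ($C^0$\--continuous in $(f,x)$) projections between $T\cF^c$ and the slices along $E^u$; harmless, but it should be said. Second, the crux --- that $D_cA_f$ is the unique continuous invariant section of a fiber contraction $T_f$ with rate controlled by \eqref{global}, and that $T_f$ varies continuously with $f$ in the $C^2$ topology --- is asserted rather than verified; this is precisely where the quantitative work lies (it is the analogue of the paper's Claims~1 and~2, where Lipschitz continuity of $Df$ is used to bound the inhomogeneous term). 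Third, invoking the Fisher--Potrie--Sambarino leaf conjugacy to put the operators $T_f$ on a common Banach space is an awkward device: the conjugacy is only $C^0$ and is not adapted to the unstable direction. The paper's mechanism --- identifying the bundles for nearby maps by the fiberwise linear projection of $E^c_{f_n}$ onto $E^c_f$ parallel to $E^s_f\oplus E^u_f$ --- is cleaner and is what you should use for the jet bundles as well; it also handles joint continuity in the point $x$ across different center\--unstable leaves, which a purely leafwise construction does not automatically give.
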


\begin{remark}
The continuity in Lemma \ref{le:contc1} means that if $f_n$ converges to $f$ in the $C^2$ topology, $x_n$ converges to $x$ in $\TT^2$, and $y_n\in\cF^u_{f_n, loc}(x_n)$ converges to $y$, then $Dh_{f_n,x_n,y_n*}^u$ converges uniformly to $Dh_{f,x,y*}^u$. The proof requires only the weaker global condition \ref{global}.
\end{remark}

\begin{remark}
Since in our case the stable and unstable bundles are one-dimensional, one could approach the continuity question using the classical ODE theory of the regularity of solutions with respect to the initial conditions and parameters. We prefer to present a different proof which constructs the projectivized holonomies as unstable foliations of the projectivization of $f$ along the center bundle.
\end{remark}

\begin{proof}
Let $T^1\TT^4$ be the unit tangent bundle of $\TT^4$ (which can be identified with $\TT^4\times\mathbb S^3$) with $Df_*$ being the $C^1$ diffeomorphism induced by $f$. We will consider the central unit tangent bundle $S_f:=\cup_{x\in\TT^4}S(f,x)$ where $S(f,x)=T_x^1\cF^c_f(x)$ is the unit circle in $E^c_f(x)$. Then $S_f$ is a H\"older submanifold of $T^1\TT^4$ invariant under $Df_*$, which is also a H\"older bundle over $\TT^4$.

We claim that there exists a continuous unstable foliation on $S_f$ and that $Dh_{f,x,y*}^u$ is exactly the unstable holonomy for this foliation between the transversals $T^1\cF^c_f(x)$ and $T^1\cF^c_f(y)$. We apply the standard construction of the local unstable leaves as the invariant section of a bundle contraction map (see \cite{HPS} for example), with a minor difficulty arising from the lack of smoothness.

For any $x,y\in \TT^4$ we define the $\pi_{f,y,x}:E^c_f(y)\rightarrow E^c_f(x)$ the projection parallel to $E^s_f(x)\oplus E^u_f(x)$. The maps $\pi_{f,y,x}$ depend continuously on $x,y\in\TT^4$ and $f$ (in the $C^1$ topology). For $x$ close to $y$ this is invertible and close to the identity, and its projectivization $\pi_{f,x,y*}$ is bi-Lipschitz with Lipschitz constant close to 1. 

For $\delta>0$ and $x\in\TT^4$ let $\alpha_{f,x}:[-\delta,\delta]\rightarrow\cF^u_f(x)$ be the length parametrization of the local unstable manifold of $f$ at $x$. Since the unstable foliation is orientable and depends continuously in the $C^1$ topology with respect to $x$ and $f$, we have that $\alpha_{f,x}$ is continuous in $x$ and $f$ (in the $C^1$ topology). 

For any $\delta>0$ there exists $\epsilon_{\delta}>0$ such that for any $x,y$ such that $d(x,y)<\delta$ we have:
\begin{itemize}
\item
$\|\pi_{f,x,y}^{\pm 1}-Id\|<\epsilon_{\delta}$,
\item
$\pi_{f,x,y*}^{\pm 1}$ is bi-Lipschitz with constant $(1+\epsilon_{\delta})$,
\item
$(1+\epsilon_{\delta})^{-1}\lambda_c^-(x)<\lambda_c^-(y)\leq\lambda_c^+(y)<(1+\epsilon_{\delta})\lambda_c^+(x)$,
\item
If furthermore $y\in\cF^u_{f,\delta}(x)$ then $d_u(f(x),f(y))\geq(1+\epsilon_{\delta})^{-1}\lambda_u(x)d_u(x,y)$, where $d_u$ is the distance along the unstable leaves.
\end{itemize}
We can choose $\epsilon_{\delta}$ independent of $f$ in a $C^1$ neighborhood and $\lim_{\delta\rightarrow 0}\epsilon_{\delta}=0$.

Now we will construct the bundle with the candidates for the local unstable manifolds in $S_f$. Consider $\delta>0$ (small) to be specified later. Let
$$
B=\{\sigma:[-\delta,\delta])\rightarrow \mathbb R:\ \sigma(0)=0,\ \left|\frac{\sigma(t)}{t}\right|<\infty\}.
$$
the Banach space of functions $\sigma$ bounded for the norm
$$
\|\sigma\|=\sup_{t\in [-\delta,\delta]}\left|\frac{\sigma(t)}{t}\right|.
$$
Then
$$
V(f):=S_f\times B
$$
is a continuous (in fact H\"older) Banach bundle over $S_f$.
\begin{remark}
The maps $\sigma$ are candidates for unstable manifolds in $S_f$ in the following sense. For any $(x,v)\in S_f$ and $\sigma\in B$ we can define a section $\tilde\sigma:\cF^u_{f,\delta}(x)\rightarrow S_f$ in the following way:
$$
\tilde\sigma(y):=\pi_{f,y,x*}^{-1}(v+\sigma(\alpha_{f,x}^{-1}(y)))\in S(f,y).
$$
The graph of this section $\tilde\sigma$ is a natural candidate for the local unstable manifold in $(x,v)\in S_f$. We construct it as a fixed point of the natural graph transformation.
\end{remark}

Let $T:V(f)\rightarrow V(f)$ be the bundle map which fibers over $Df_*$ on $S_f$ and is given by
\begin{eqnarray*}
T\sigma_{(x,v)}(t)&=&\left(\pi_{f,f(y(t)),f(x)}\circ Df(y(t))\circ\pi_{f,y(t),x}^{-1}\right)_*(v+\sigma(\alpha_{f,x}^{-1}(y(t)))-Df(x)_*(v),\\
y(t)&=&f^{-1}\circ\alpha_{f,f(x)}(t).
\end{eqnarray*}
One can check that in fact $T$ is defined in such a way so that we have $\tilde{T\sigma}=Df_*\tilde\sigma$. Let us check that $T$ is a continuous bundle map on $V(f)$, which is also a fiber contraction.

{\bf Claim 1: If $\sigma\in B$ then $T\sigma_{(x,v)}\in B$.}

Remember that $y(t)=f^{-1}\circ\alpha_{f,f(x)}(t)$, and let us denote
$$
G(t):=\left(\pi_{f,f(y(t)),f(x)}\circ Df(y(t))\circ\pi_{f,x,y}^{-1}\right).
$$
Observe that $G(t)_*$ is Lispchitz with the Lipschitz constant
$$
Lip(G(t)_*)=(1+\epsilon_{\delta})^2\frac{\lambda_c^+(y(t))}{\lambda_c^-(y(t))}\leq(1+\epsilon_{\delta})^4\frac{\lambda_c^+(x)}{\lambda_c^-(x)}.
$$
Also
$$
|\alpha_{f,x}^{-1}(y(t))|=d_u(x,y(t))\leq(1+\epsilon_{\delta})\lambda_u(x)^{-1}d_u(f(x),f(y(t))=(1+\epsilon_{\delta})\lambda_u(x)^{-1}|t|.
$$
Then
\begin{eqnarray*}
\|T\sigma\|&=&\sup_{t\in [-\delta.\delta]}\left|\frac{G(t)_*(v+\sigma(\alpha_{f,x}^{-1}(y(t)))-Df(x)_*(v)}t\right|\\
&\leq& \sup_{t\in [-\delta.\delta]}\left|\frac{G(t)_*(v+\sigma(\alpha_{f,x}^{-1}(y(t)))-G(t)_*(v)}t\right|+ \sup_{t\in [-\delta.\delta]}\left|\frac{G(t)_*(v)-Df(x)_*(v)}t\right|\\
&\leq& (1+\epsilon_{\delta})^4\frac{\lambda_c^+(x)}{\lambda_c^-(x)}\sup_{t\in [-\delta.\delta]}\left|\frac{\sigma(\alpha_{f,x}^{-1}(y(t))}t\right|+\frac {\pi}2\sup_{t\in [-\delta.\delta]}\frac 1t\left\|\frac{G(t)(v)}{\|G(t)(v)\|}-\frac{Df(x)(v)}{\|Df(x)(v)\|}\right\|\\
&\leq&(1+\epsilon_{\delta})^5\frac{\lambda_c^+(x)}{\lambda_c^-(x)\lambda_u(x)}\|\sigma\|+\frac{\pi}{\lambda_c^+(x)}\sup_{t\in [-\delta.\delta]}\left\|\frac{G(t)(v)-Df(x)(v)}t\right\|,
\end{eqnarray*}
where in the last line we used the inequality
$$
\left\|\frac a{\|a\|}-\frac b{\|b\|}\right\|\leq\left\|\frac a{\|a\|}-\frac a{\|b\|}\right\|+\left\|\frac a{\|b\|}-\frac b{\|b\|}\right\|\leq\frac 2{\|b\|}\|a-b\|.
$$
Let us remark that if $v\in E^c_f(x)$ then $\pi_{f,f(y(t)),f(x)}\circ Df(x)\circ\pi_{f,y,x}^{-1}(v)=Df(x)(v)$, because the partially hyperbolic splitting is invariant under $Df$. Then
\begin{eqnarray*}
\|G(t)(v)-Df(x)(v)\|&=&\|\pi_{f,f(y(t)),f(x)}\circ (Df(y(t))-Df(x))\circ\pi_{f,y,x}^{-1}(v)\|\\
&\leq&(1+\epsilon_{\delta})^2Lip(Df)d(y(t),x)\\
&\leq& Lip(Df)\frac{(1+\epsilon_{\delta})^3}{\lambda_u(x)}|t|.
\end{eqnarray*}
Finally we obtain the desired bound:
$$
\|T\sigma\|\leq \frac{(1+\epsilon_{\delta})^5\lambda_c^+(x)}{\lambda_c^-(x)\lambda_u(x)}\|\sigma\| + \frac{Lip(Df)(1+\epsilon_{\delta})^3\pi}{\lambda_c^+(x)\lambda_u(x)}.
$$

{\bf Claim 2: $T$ is a fiber contraction.}

\begin{eqnarray*}
\|T\sigma_1-T\sigma_2\| & = & \sup_{t\in [-\delta,\delta]}\left|\frac{T\sigma_1(t)-T\sigma_2(t)}{t}\right|\\
&=& \sup_{t\in [-\delta,\delta]}\left|\frac{G(t)_*(v+\sigma_1(\alpha_{f,x}^{-1}(y(t)))-G(t)_*(v+\sigma_2(\alpha_{f,x}^{-1}(y(t)))}{t}\right|\\
&\leq& (1+\epsilon_{\delta})^4\frac{\lambda_c^+(x)}{\lambda_c^-(x)} \sup_{t\in [-\delta,\delta]}\left|\frac{\sigma_1(\alpha_{f,x}^{-1}(y(t))-\sigma_2(\alpha_{f,x}^{-1}(y(t))}{t}\right|\\
&\leq& (1+\epsilon_{\delta})^5\frac{\lambda_c^+(x)}{\lambda_c^-(x)\lambda_u(x)}\|\sigma_1-\sigma_2\|.
\end{eqnarray*}

Now all we have to do is to choose $\delta$ small enough so that $\epsilon_{\delta}$ is close enough to zero so that we have
$$
(1+\epsilon_{\delta})^5\frac{\lambda_c^+(x)}{\lambda_c^-(x)\lambda_u(x)}<1.
$$

Claims 1 and 2 show that we are in the conditions of the Invariant Section Theorem from \cite{HPS}, so there exists a unique bounded continuous invariant section.

From \cite{PSW} we know that the unstable holonomy along center leaves is uniformly differentiable. The projectivisation of the derivative of this local holonomy will then correspond to a bounded invariant section for the transfer operator $T$, so it has to coincide with the unique continuous invariant section constructed above. This concludes the proof of the continuity of $Dh^u_{f,x,y*}$ with respect to the points $x,y$ (we proved it for $d(x,y)<\delta$, but this can be easily extended to larger distances). 

If $f_n$ converges to $f$ then $S_{f_n}$ converges to $S_f$ (this can be made explicit by projecting $E_{f_n}^c$ to $E^c_f$ parallel to $E_f^s\oplus E_f^u$ for example). One can check that the corresponding transfer operators $T_{f_n}$ also converge to $T_f$. Since the invariant section is continuous with respect to the bundle map, we obtain the continuity of $Dh^u_{f,x,y*}$ with respect to $f$.

\end{proof}

\begin{remark}
We gave the proof for our special setting, but the proof can be adapted to general partially hyperbolic deffeomorphisms in higher dimensions. We used that $Df$ is Lipschitz in order to show that the transfer operator $T$ verifies the conditions of the Invariant Section Theorem. The proof can be adapted for $f$ of class $C^{1+\alpha}$ and the stronger bunching condition $\lambda_s(x)^{\alpha}<\frac{\lambda_c^-(x)}{\lambda_c^+(x)}\leq\frac{\lambda_c^+(x)}{\lambda_c^-(x)}<\lambda_u(x)^{\alpha}$, using the norm $\|\sigma\|=\sup_{t\in[-\delta,\delta]}\left|\frac{\sigma(t)}{t^{\alpha}}\right|$. Once one obtains the bounded invariant section for the projectivization $Df_*$ on $S_f$, using the boundness of the Jacobian, one could try to obtain the differentiability of the stable/unstable holonomy along center leaves.
\end{remark}

\subsection{Homoclinic holonomies}

Let $f\in \mathcal {SH}^2_b$ and $p_f$ the fixed point such that $f\mid_{\cF^c_f(p_f)}$ is Anosov. We will drop the index $f$ when it is not necessary to specify the dependence on the map $f$. From \cite{HPS} and the bunching conditions we know that $\cF^c(p), \cF^{cu}(p)$ and $\cF^{cs}(p)$ are $C^2$ submanifolds. Assume that $q$ is a homoclinic point of $W^c(p)$, i.e. $q\in\cF^{cu}(p)\cap\cF^{cs}(p)$, then $W^c(q)$ is also $C^2$ as a connected component of the intersection of the transverse $C^2$ submanifolds $\cF^{cu}(p)$ and $\cF^{cs}(p)$. We can define the stable holonomy $h_{p,q}^s:\cF^c(p)\rightarrow \cF^c(q)$ and the unstable holonomy $h_{q,p}^u:\cF^c(q)\rightarrow \cF^c(p)$, and they are both of class $C^1$. Then $\tilde h_q:=h_{q,p}^u\circ h_{p,q}^s:\cF^c(p)\rightarrow \cF^c(p)$ is a $C^1$ diffeomorphism, so it induces a $C^0$ map on the unit tangent bundle $T^1\cF^c(p)$ which we denote by $D\tilde h_{q*}$.

Let $\tilde v^s(x)$ be the unit vector tangent in $x\in \cF^c(p)$ to the stable bundle of $f\mid_{\cF^c(p)}$ (we fix an orientation). Since $f\mid_{\cF^c(p)}$ is a $C^2$ Anosov map on a $C^2$ surface, we have that $\tilde v^s:\cF^c(p)\rightarrow T^1\cF^c(p)$ is $C^1$. We define the map $\tilde g_{q}:\cF^c(p)\rightarrow T^1\cF^c(p)$, 
\begin{equation}
\tilde g_{q}(x):=D\tilde h_{q*}(\tilde v^s(h_q^{-1}(x)))=\frac{D\tilde h_q(h_q^{-1}(x))\tilde v^s(h_q^{-1}(x))}{\|D\tilde h_q(h_q^{-1}(x))\tilde v^s(h_q^{-1}(x))\|}.
\end{equation}

\begin{remark}
In fact we consider the stable foliation of $f\mid_{\cF^c(p)}$ inside the leaf $\cF^c(p)$, we first push it forward using the stable holonomy $h_{p,q}^s$ to the leaf $\cF^c(q)$, and then we push it again using the unstable holonomy $h_{q,p}^u$ back to the leaf $\cF^c(p)$. Then $\tilde g_{q}(x)$ is in fact the unit tangent vector in $x$ to this new foliation.
\end{remark}

If furthermore $f\in\mathcal {SH}^3_b$ then the stable and unstable holonomies along the fixed center-stable leaf $W^{cs}(p)$ and respectively the fixed center-unstable leaf $W^{cu}(p)$ are $C^2$, so in this case $D\tilde h_{q*}$ and $\tilde g_q$ are in fact $C^1$.

If the map $f'$ is $C^2$ close to $f$, then the fixed Anosov center leaf $\cF^c(p)$ and its homoclinic center leaf $\cF^c(q)$ will have continuations $\cF^c_{f'}(p(f'))$ and $\cF^c_{f'}(q(f'))$. Then we obtain the continuations of the stable holonomy $h_{p(f'),q(f'),f'}^s:\cF^c_{f'}(p(f'))\rightarrow \cF^c_{f'}(q(f'))$ and the unstable holonomy $h_{q(f'),p(f'),f'}^u:\cF^c_{f'}(q(f'))\rightarrow \cF^c_{f'}(p(f'))$; they are $C^1$ maps and depend continuously in the $C^1$ topology with respect to $f'$ (in the $C^2$ topology). We also have a continuation of the homoclinic holonomy $\tilde h_{q(f'),f'}:\cF^c_{f'}(p(f'))\rightarrow \cF^c_{f'}(p(f'))$ and also the continuation $\tilde g_{q(f'),f'}:\cF^c_{f'}(p(f'))\rightarrow T^1\cF^c_{f'}(p(f'))$, which is continuous both with respect to $x\in \cF^c_{f'}(p(f'))$ and with respect to $f'\in\mathcal {SH}^2_b$ (in the $C^2$ topology).

\subsection{Hyperbolic measures\label{ss.hyperbolic measures}}

Let $\mu$ be an ergodic measure of a diffeomorphism $f$, then by the Theorem of Oseledets, for $\mu$-almost every point $x\in M$, there exist $k(\mu)\in \mathbb{N}$, real numbers $\lambda_1(\mu)>\cdots \lambda_k(\mu)$ and an invariant splitting $T_xM=E^1(x)\oplus \cdots \oplus E^k(x)$ of the tangent bundle at $x$, depending measurably on the point, such that $\lim_{n\to \pm \infty} \frac{1}{n}\log \|Df^n_x(v)\|=\lambda_j(\mu)$ for all $0\neq v\in E^j(x)$. The real numbers $\lambda_j(\mu)$ are the \emph{Lyapunov exponents} of $\mu$. We say that the ergodic measure $\mu$ is \emph{hyperbolic} if all the Lyapunov exponents of $\mu$ are non-zero.

\begin{theorem}\label{l.closing}[Katok's Horseshoe Theorem \cite{K}:]
For any $f\in \Diff^r(M)$, $r>1$ and any hyperbolic ergodic measure $\mu$, there exists a hyperbolic periodic point $p$, such that $\mu \overset{h}{\sim} \delta_{\Orb(p)}$,
where $\delta_{\Orb(p)}$ is the ergodic measure supported on the orbit $\Orb(p)$.
\end{theorem}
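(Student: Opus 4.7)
The plan is to reduce this to the classical Katok closing lemma together with a standard Pesin theory argument showing that the periodic orbit produced by closing is homoclinically related (in the measured sense of Definition \ref{df.homoclinic measures}) to the given hyperbolic ergodic measure $\mu$.

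First, I would set up Pesin theory. Since $\mu$ is hyperbolic and $f\in\Diff^r(M)$ with $r>1$, Oseledets gives a measurable splitting $T_xM=E^s(x)\oplus E^u(x)$ with nonzero Lyapunov exponents $\mu$-a.e., and Pesin's stable manifold theorem produces local stable and unstable manifolds $W^s_{\loc}(x)$, $W^u_{\loc}(x)$ of class $C^r$, tangent to $E^{s/u}(x)$. These objects can be controlled uniformly on Pesin blocks $\Lambda_\ell$: for every $\varepsilon>0$ there is an increasing sequence of compact sets $\Lambda_\ell$ with $\mu(\bigcup_\ell \Lambda_\ell)=1$ such that on each $\Lambda_\ell$ the local manifolds have size bounded below by some $\delta_\ell>0$, depend continuously on the base point, and the angle between $E^s$ and $E^u$ is uniformly bounded away from zero. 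Fix $\ell$ large enough that $\mu(\Lambda_\ell)>0$.

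Second, I would apply the Katok closing lemma on $\Lambda_\ell$. By Poincar\'e recurrence and the Birkhoff ergodic theorem, $\mu$-almost every $x\in\Lambda_\ell$ returns to $\Lambda_\ell$ with positive frequency. In particular, for $\mu$-a.e.~such $x$, one can find arbitrarily large $N$ and a point $x\in\Lambda_\ell$ with $f^N(x)\in\Lambda_\ell$ and $d(f^N(x),x)$ as small as we wish. Katok's closing lemma then produces a hyperbolic periodic point $p$ of period $N$ which shadows the orbit segment $\{x,f(x),\dots,f^{N-1}(x)\}$; in particular $p$ is close to $x$ and, by the construction, $W^s_{\loc}(p)$ and $W^u_{\loc}(p)$ are uniformly large and approximate $W^s_{\loc}(x)$ and $W^u_{\loc}(x)$ in the $C^1$ topology. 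This is the content of the classical Katok--Mendoza argument; the hyperbolicity and uniform size of the invariant manifolds of $p$ come from graph transform estimates applied along the shadowing orbit.

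Third, I would upgrade the proximity of manifolds to a homoclinic relation between the measures $\mu$ and $\delta_{\Orb(p)}$. Because the Pesin manifolds on $\Lambda_\ell$ depend continuously on the point and have size $\geq \delta_\ell$, by choosing $x$ with a very close return, the unstable manifold $W^u(p)$ can be made to cross $W^s_{\loc}(y)$ transversally for every $y$ in a small neighborhood $U$ of $x$ inside $\Lambda_\ell$; transversality is preserved under small $C^1$ perturbations of the manifolds thanks to the uniform angle bound on the Pesin block. The set $A_1:=U\cap \Lambda_\ell$ has positive $\mu$-measure (provided $x$ is chosen to be a density point of $\Lambda_\ell$, which is possible for $\mu$-a.e.~$x$), and one gets $W^u(y)\pitchfork W^s(p)\neq\emptyset$ for every $y\in A_1$. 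The symmetric statement, using $W^u_{\loc}(p)\pitchfork W^s(y)$ for $y$ in a positive $\mu$-measure set $A_1'$, is obtained by repeating the argument (or by applying $f^n$ and using that unstable Pesin manifolds grow exponentially). Taking $A_2=\{p\}\subset \Orb(p)$ in the sense of Definition~\ref{df.homoclinic measures} gives $\mu\overset{h}{\sim}\delta_{\Orb(p)}$.

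The main obstacle is the Katok closing lemma itself, i.e.\ producing the hyperbolic periodic point $p$ whose invariant manifolds are genuinely close to the Pesin manifolds at $x$; this requires a careful graph-transform argument in Lyapunov charts along the orbit segment, and is where the $C^{1+}$ regularity of $f$ is used. The transversality and continuity-on-Pesin-blocks ingredients are then soft, but they depend on having the right quantitative control on the angle between $E^s(x)$ and $E^u(x)$ and on the continuity of the Pesin manifolds within each block.
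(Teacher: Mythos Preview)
The paper does not give a proof of this statement: it is stated as Katok's Horseshoe Theorem and attributed to \cite{K}, with no argument supplied. Your sketch follows the standard Katok--Pesin route (Pesin blocks, closing lemma, transverse intersection of Pesin manifolds with the periodic orbit's invariant manifolds) and is correct in outline; this is exactly the kind of argument one would cite \cite{K} (and, for the measured homoclinic relation, \cite{BCS}) for.

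One small remark: in your third step you write ``$W^u(y)\pitchfork W^s(p)\neq\emptyset$ for every $y\in A_1$'' after having set up the crossing the other way around (you wanted $W^u(p)$ to cross $W^s_{\loc}(y)$). Just make sure the two directions are stated consistently so that both $\mu\preceq\delta_{\Orb(p)}$ and $\delta_{\Orb(p)}\preceq\mu$ are actually verified; the argument is symmetric, but the write-up should keep the roles of stable and unstable straight.
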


If a diffeomorphism $f$ admits a dominated splitting, then the Oseledet's splitting must be subordinated to the dominated splitting. In particular, since every $f\in \mathcal{SH}$ is partially hyperbolic on $\mathbb{T}^4$, then for any ergodic measure $\mu$ of $f$, its biggest Lyapunov exponent is positive ($\lambda^u>0$) and its associated Oseledet's bundle is tangent to the strong unstable bundle $E^u$ of $f$. A similar result holds for the minimal Lyapunov exponent $\lambda^s<0$ with its associated Oseledet's bundle tangent to the strong stable bundle $E^s$. There are also two center Lyapunov exponents (counted with multiplies) $\lambda^c_1\geq \lambda^c_2$ whose associated Oseledet's bundles are tangent to the center bundle $E^c$ of $f$.

\subsection{Criterion of uniqueness of equilibrium state}

\begin{definition}
Let $\mu$ be an ergodic hyperbolic measure of a diffeomorphism $f$. The \emph{stable index of $\mu$} is the number of negative Lyapunov exponents, counted with multiplicity.
\end{definition}

\begin{proposition}\label{p.criterion}
Let $f: M \to M$ be a $C^r$ diffeomorphism $r > 1$, $\phi:M\rightarrow\mathbb R$ be a H\"older potential, and $p$ a hyperbolic periodic point. Then there is at most one equilibrium state for $\phi$ which is homoclinically related to $\delta_{Orb(p)}$, and its support coincides with $\HC(Orb(p))$.
\end{proposition}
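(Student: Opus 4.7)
The plan is to reduce both statements to the thermodynamic formalism of countable Markov shifts. Ben Ovadia's coding \cite{Be1,Be2} (refining Sarig's) associates to every ergodic hyperbolic measure $\mu$ a lift $\hat\mu$ to an ergodic measure on an irreducible component $\hat\Sigma_\mu$ of a countable Markov shift, via a finite-to-one factor $\pi\colon\hat\Sigma_\mu\to M$; moreover, two ergodic hyperbolic measures are homoclinically related in the sense of Definition~\ref{df.homoclinic measures} if and only if they admit lifts living on the same irreducible component. Any equilibrium state $\mu$ appearing in the statement is automatically ergodic and hyperbolic, because $\mu\overset{h}{\sim}\delta_{\Orb(p)}$ presupposes that $\mu$ lies in the domain of the homoclinic relation.

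Suppose for contradiction that two distinct equilibrium states $\mu_1,\mu_2$ satisfy $\mu_i\overset{h}{\sim}\delta_{\Orb(p)}$. Transitivity of the homoclinic relation (Remark~\ref{rk.homoclinic measures}) yields $\mu_1\overset{h}{\sim}\mu_2$, so both lift to distinct ergodic measures $\hat\mu_1\neq\hat\mu_2$ on one and the same irreducible component $\hat\Sigma$. Because $\pi$ is finite-to-one on a set of full measure, each $\hat\mu_i$ is an equilibrium state on $\hat\Sigma$ for the H\"older pullback $\hat\phi:=\phi\circ\pi$, with the same pressure as $\phi$ on the ambient manifold. Sarig's uniqueness theorem for H\"older equilibrium states on topologically mixing countable Markov shifts with finite Gurevich pressure then forces $\hat\mu_1=\hat\mu_2$ and hence $\mu_1=\mu_2$, a contradiction. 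This establishes uniqueness.

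For the support, the inclusion $\supp(\mu)\subseteq\HC(\Orb(p))$ follows from Katok's horseshoe theorem (Theorem~\ref{l.closing}): $\mu$ is approximated in the weak-$*$ topology by hyperbolic periodic measures $\delta_{\Orb(q_n)}$ with $\delta_{\Orb(q_n)}\overset{h}{\sim}\mu\overset{h}{\sim}\delta_{\Orb(p)}$, whose supports $\Orb(q_n)$ therefore lie in $\HC(\Orb(p))$, and weak-$*$ convergence transports this into the desired inclusion after taking closures. The reverse inclusion uses the Bowen-style positivity of equilibrium states on transitive sets: any periodic $q\in\HC(\Orb(p))$ homoclinically related to $p$ satisfies $\delta_{\Orb(q)}\overset{h}{\sim}\mu$, hence lifts to the same irreducible component $\hat\Sigma$ as $\hat\mu$, and topological mixing of $\hat\Sigma$ together with positivity of $\hat\mu$ on cylinders places $q$ in $\supp(\mu)$. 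The main technical hurdle, which I would handle by citation rather than reproof, is the verification of the hypotheses of Sarig's uniqueness theorem (finite Gurevich pressure, positive recurrence of $\hat\phi$); in the application to Theorem~\ref{main.measured homoclinic } this is precisely what the smallness assumption $\max_{x,y}|\phi(x)-\phi(y)|<\log\lambda_B$ is designed to guarantee.
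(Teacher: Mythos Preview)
Your sketch is essentially correct and follows the same route as the references the paper invokes: the paper's own proof of this proposition is nothing more than a pointer to Ben Ovadia \cite{Be1,Be2} and Buzzi--Crovisier--Sarig \cite{BCS}, whose content you have accurately outlined (lift to an irreducible countable Markov shift, identify the measured homoclinic class with a single irreducible component, apply Sarig's uniqueness on that component, and read off full support from positivity on cylinders).

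One point in your final paragraph deserves correction. You misidentify the role of the hypothesis $\max_{x,y}|\phi(x)-\phi(y)|<\log\lambda_B$ from Theorem~\ref{main.measured homoclinic }. That bound is \emph{not} what secures the hypotheses of Sarig's uniqueness theorem on the symbolic side. Once an equilibrium state $\mu$ homoclinically related to $\delta_{\Orb(p)}$ exists, its lift $\hat\mu$ already forces the Gurevich pressure of $\hat\phi$ on the relevant irreducible component to be finite and equal to $P_{top}(\phi)$: the variational principle for countable Markov shifts, combined with the fact that $\pi$ is finite-to-one and entropy-preserving on a full-measure set, gives $P_G(\hat\phi)\le P_{top}(\phi)<\infty$, while $\hat\mu$ itself realises equality. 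Positive recurrence of $\hat\phi$ is then a \emph{consequence} of the existence of $\hat\mu$ as an equilibrium measure, not a hypothesis to be checked. The smallness condition on $\phi$ enters Theorem~\ref{main.measured homoclinic } at an entirely different step: via Lemma~\ref{l.largeentropy} it forces any equilibrium state to have entropy exceeding $\log\lambda_A$, hence to be hyperbolic of index~$2$, so that the coding and the homoclinic relation apply to it at all. Proposition~\ref{p.criterion} itself needs no such restriction on $\phi$.
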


\begin{proof}
This is explained in \cite{Be2}[Theroem 2.4] and \cite{BCS}[Section 1.6]. See also \cite{Be1} and \cite{BCS}[Corollary 3.3].
\end{proof}

\subsection{Hyperbolicity of equilibrium states}

If $f\in\PH_{A,B}^0$ then a standard results of Franks-Manning \cite{Fra, Man} imply that $f$ is semi-conjugate to $f_{A,B}$, i.e., there exists a continuous surjection
$h : \mathbb{T}^4\to \mathbb{T}^4$ homotopic to the identity such that $f_{A,B}\circ h=h\circ f$. By Ledrappier-Walters variational principle \cite{LW} we have
\begin{equation}\label{eq.globalentropy}
h_{top}(f)\geq h_{top}(f_{A,B})=\log\lambda_A+\log \lambda_B.
\end{equation}
 
For any invariant probability measure $\mu$ of $f$, we say that a measurable partition $\xi$ is $\mu$ \emph{adapted (sub-ordinated) to $\cF^u$} if the following conditions are satisfied:
\begin{itemize}
\item there is $r_0 > 0$ such that $\xi(x) \subset B^{\cF^u}_{r_0}(x)$ for $\mu$ almost every $x$, where $B^{\cF^u}_{r_0}(x)$ is a ball of $\cF^u(x)$ with radius $r_0$;
\item $\xi(x)$ contains an open neighborhood of $x$ inside $\cF^u(x)$;
\item $\xi$ is increasing, that is, for $\mu$ almost every $x$, $\xi(x)\subset f(\xi(f^{-1}(x)))$.
\end{itemize}
The existence of such a partition was provided by \cite{LS} (see also \cite{LY} and \cite{Ya}). The \emph{partial entropy of $\mu$ along the expanding foliation $\cF^u$} is defined by
$$h_\mu(f, \cF^u) = H_\mu(f^{-1}\xi \mid \xi).$$
The definition of the partial entropy does not depend on the choice of the partition. 

The following two lemmas are important for our further discussion:

\begin{lemma}\label{l.upuentropy}
If $f\in \PH_{A,B}^0$, then $h_\mu(f,\cF^u)\leq \log \lambda_A$.
\end{lemma}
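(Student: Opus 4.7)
The plan is to combine two inequalities: a bound on the partial entropy along $\cF^u$ by the leaf-wise volume growth of $\cF^u$, and a bound on this volume growth by $\log\lambda_A$ coming from the Franks--Manning semi-conjugacy.

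\textbf{Step 1 (partial entropy vs.\ volume growth).} I would first invoke the by-now standard fact (compare the partition constructions in \cite{LS,LY,Ya}) that for any $C^{1+}$ diffeomorphism and any invariant measure $\mu$,
$$h_\mu(f,\cF^u)\leq\chi^u(f):=\limsup_{n\to\infty}\frac1n\log\sup_{x\in\TT^4}\Leb_{\cF^u(x)}\bigl(f^n(D^u_r(x))\bigr),$$
where $D^u_r(x)$ is a radius-$r$ disk in $\cF^u(x)$. This reduces the statement to showing $\chi^u(f)\leq\log\lambda_A$, which is a purely topological/geometric assertion independent of $\mu$.

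\textbf{Step 2 (volume growth and the semi-conjugacy).} Let $h:\TT^4\to\TT^4$ be the Franks--Manning semi-conjugacy and choose lifts $\tilde f,\tilde h,\tilde f_{A,B}:\RR^4\to\RR^4$ with $\tilde f_{A,B}\circ\tilde h=\tilde h\circ\tilde f$ and $\sup_{x\in\RR^4}\|\tilde h(x)-x\|\leq K<\infty$ (the standard bounded-displacement estimate, since $h$ is homotopic to the identity). Fix $x\in\TT^4$, set $D:=D^u_r(x)$ and choose a lift $\tilde D$. For each $n\geq 0$ we have the identity
$$\tilde h\bigl(\tilde f^n(\tilde D)\bigr)=\tilde f_{A,B}^{\,n}\bigl(\tilde h(\tilde D)\bigr).$$
The set $\tilde h(\tilde D)$ has diameter at most $r+2K$, and since $\tilde f_{A,B}$ is linear with operator norm $\lambda_A$, its $n$-th image has diameter at most $(r+2K)\lambda_A^{\,n}$. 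Applying the bounded-displacement estimate one more time yields
$$\diam_{\RR^4}\bigl(\tilde f^n(\tilde D)\bigr)\leq(r+2K)\lambda_A^{\,n}+2K.$$

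\textbf{Step 3 (diameter vs.\ length) and main obstacle.} The plan is now to convert the ambient-diameter bound into a leaf-length bound. For $f\in\PH^0_{A,B}$ the lifted one-dimensional unstable foliation on $\RR^4$ is quasi-isometric to the foliation by parallel lines in the $A$-unstable direction (a now-standard property for one-dimensional strong foliations of partially hyperbolic systems on $\TT^4$ isotopic to an Anosov linear model, cf.\ the leaf-conjugacy setup of \cite{FPS}); this uses that $E^u_f$ is uniformly $C^0$-close to the $A$-unstable direction and that unstable leaves are properly embedded. Hence the length of $\tilde f^n(\tilde D)$ is comparable to its ambient diameter, giving $\Leb_{\cF^u}(f^n(D))\leq C\lambda_A^{\,n}$, so $\chi^u(f)\leq\log\lambda_A$ as required. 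The hardest point is precisely this quasi-isometry: without it, ambient diameter could be much smaller than leaf-wise length, and the semi-conjugacy estimate in Step 2 would not translate directly into a volume-growth bound.
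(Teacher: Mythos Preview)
Your argument is correct but takes a genuinely different route from the paper. The paper does not go through unstable volume growth or the Franks--Manning semi-conjugacy on $\TT^4$ at all: it quotients by the center foliation $\cF^c_f$ (Proposition~\ref{p.center classification} identifies the leaf space with $\TT^2$ and the induced map with $A$), observes that the projection $\pi^c_f$ sends each $\cF^u_f$-leaf into an $\cF^u_A$-leaf, and then invokes Tahzibi--Yang \cite{TY}[Theorem~A] directly to obtain $h_\mu(f,\cF^u)\leq h_{top}(A)=\log\lambda_A$. Your route trades the \cite{TY} black box for two other inputs: the inequality $h_\mu(f,\cF^u)\leq\chi^u(f)$ (standard, and in fact a result of one of the present authors) and the quasi-isometry of the lifted strong unstable foliation. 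The paper's proof is shorter and works already for $C^1$ maps; yours is more geometrically explicit about the role of the linear model.

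One caveat on Step~3: your stated reason for quasi-isometry---that $E^u_f$ is uniformly $C^0$-close to the $A$-unstable direction---need not hold for arbitrary $f\in\PH^0_{A,B}$, only for $f$ in a $C^1$-neighborhood of $f_{A,B}$. Quasi-isometry of the lifted $\cF^u_f$ is nevertheless true; it is one of the intermediate steps in the leaf-conjugacy program of Hammerlindl and \cite{FPS}, proved by showing (via the semi-conjugacy) that lifted strong unstable leaves lie within bounded Hausdorff distance of the corresponding linear leaves and are globally transverse to the lifted center-stable foliation. So the conclusion of Step~3 stands, but you should cite the quasi-isometry result itself rather than deducing it from pointwise closeness of the bundles.
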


\begin{proof}
Denote by $\cF^c_f$ the center foliation of $f$. By Proposition~\ref{p.center classification},  the projection map $\pi^c_f$ along the center foliation induces a topological
Anosov homeomorphism $\overline{f}$ on the quotient space $\overline{\TT}^2_f=\TT^4/\cF^c_f$, which is topological conjugate to $A$, so we may in fact identify $\overline{\TT}^2_f$ with $\TT^2$ and $\overline{f}$ with $A$.

Denote by $\cF^s_A$ (resp. $\cF^u_A$) the stable (resp. unstable) foliation of $A$. The projection map $\pi^c_f$ maps each center unstable leaf $\cF^{cu}$
of $f$ to an unstable leaf $\cF^u_A$ of $A$. In particular, $\pi^c_f$ maps every unstable leaf $\cF^u$ of $f$ to an unstable leaf $\cF^u_A$ of $A$. Proposition~\ref{p.center classification} implies that all the hypothesis of Tahzibi-Yang~\cite{TY}[Theorem A] are satisfied (see also \cite{A}[Section 2.5]), and this implies that $h_\mu(f,\cF^u)\leq h_{top}(A)=\log \lambda_A$.
\end{proof}

The following lemma is a generalization of \cite{A}:
\begin{lemma}\label{l.largeentropy}
Leb $f\in \PH_{A,B}^0$ be a $C^r$ diffeomorphism, $r>1$, and $\mu$ an ergodic invariant measure of $f$ with $h_\mu(f)> \log \lambda_A$. Then $\mu$ is an hyperbolic ergodic measure of $f$ with stable index $2$, i.e., $\lambda^c_1>0>\lambda^c_2$.
\end{lemma}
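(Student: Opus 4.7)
The plan is to bound $h_\mu(f)$ in terms of the partial entropy along the strong unstable foliation plus the positive central Lyapunov exponents, then apply Lemma \ref{l.upuentropy} to both $f$ and $f^{-1}$ in order to rule out the possibility of all the center exponents having the same sign.

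First I would apply Lemma \ref{l.upuentropy} in both directions. Directly this gives $h_\mu(f,\cF^u)\le\log\lambda_A$. Applying the same lemma to $f^{-1}\in\PH_{A^{-1},B^{-1}}^0$, and noting that the unstable eigenvalue of $A^{-1}$ is again $\lambda_A$, yields
\begin{equation*}
h_\mu(f^{-1},\cF^s_f)\le\log\lambda_A.
\end{equation*}
Since $h_\mu(f^{-1})=h_\mu(f)$, the hypothesis $h_\mu(f)>\log\lambda_A$ forces both $h_\mu(f)>h_\mu(f,\cF^u)$ and $h_\mu(f^{-1})>h_\mu(f^{-1},\cF^s_f)$.

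The key input is then a Ledrappier--Young style decomposition of the total entropy relative to the partial entropy along $\cF^u$. For a $C^r$ ($r>1$) partially hyperbolic diffeomorphism with ergodic measure $\mu$ and splitting $E^s\oplus E^c\oplus E^u$, one has
\begin{equation*}
h_\mu(f)\;\le\; h_\mu(f,\cF^u)+\max\{0,\lambda_1^c(\mu)\}+\max\{0,\lambda_2^c(\mu)\},
\end{equation*}
where the stable direction contributes nothing because $\lambda^s<0$, and the central contribution is bounded by the positive central exponents via the Ledrappier--Young entropy formula applied to the chain $\cF^u\subset\cF^{cu}$ (see \cite{LY,Ya}). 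Suppose for contradiction that $\lambda_1^c(\mu)\le 0$. Since $\lambda_1^c\ge\lambda_2^c$, both maxima vanish, and the inequality gives $h_\mu(f)\le h_\mu(f,\cF^u)\le\log\lambda_A$, contradicting the hypothesis. Hence $\lambda_1^c(\mu)>0$.

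Applying the analogous decomposition to $f^{-1}$, whose central exponents are $-\lambda_2^c\ge-\lambda_1^c$, and using $h_\mu(f^{-1},\cF^s_f)\le\log\lambda_A$, the assumption $\lambda_2^c(\mu)\ge 0$ would give $h_\mu(f)=h_\mu(f^{-1})\le\log\lambda_A$, a contradiction. Therefore $\lambda_2^c(\mu)<0$. Combined with the strictly positive strong unstable exponent and strictly negative strong stable exponent, this shows that $\mu$ is hyperbolic with exactly two negative Lyapunov exponents, i.e.\ of stable index $2$. The main delicate point is the entropy inequality in the central paragraph: although it is natural in the spirit of \cite{LY}, in the present setting there is no dominated splitting inside $E^c$ (in contrast to \cite{A}), so one must invoke the version of the formula that works for general ergodic $\mu$ and the one-dimensional subfoliation $\cF^u\subset\cF^{cu}$, which is where the $C^{1+\alpha}$ regularity is used.
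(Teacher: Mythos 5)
Your proposal is correct and follows essentially the same route as the paper: assume a center exponent has the wrong sign, use the Ledrappier--Young theory to conclude that the total entropy reduces to the partial entropy along the strong unstable foliation (the paper states this as the equality $h_\mu(f)=h_\mu(f,\cF^u)$ when $\lambda_1^c\le 0$, which is what your inequality specializes to), contradict the hypothesis via Lemma~\ref{l.upuentropy}, and repeat for $f^{-1}$ to handle $\lambda_2^c$. The only cosmetic difference is that you state a general Ruelle-type bound with the positive center exponents, whereas the paper only invokes the degenerate case actually needed.
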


\begin{proof}
We will show that $\lambda^c_1>0$. To prove that $\lambda^c_2<0$, one only needs to consider the diffeomorphism $f^{-1}$ instead of diffeomorphism $f$.

Suppose by contradiction that $\lambda^c_1\leq 0$. The entropy formula of Ledrappier-Young (see \cite{LS}, \cite{Br}) implies that $h_\mu(f)=h_\mu(f,\cF^u)$. 

Combining with the previous lemma, we obtain that $h_\mu(f)\leq \log \lambda_A$, which is a contradiction with the hypothesis that $h_\mu(f)>\log \lambda_A$. The proof is complete. 
\end{proof}

\section{Proof of theorem \ref{main.homoclinic intersection}}

\subsection{Definition of $\mathcal U$ and plan of the proof}

\begin{definition}
Let $\mathcal U_s^r\subset \mathcal {SH}^r_b$,
\begin{equation}
\mathcal U_s^r=\left\{f\in \mathcal {SH}^r_b:\ \forall x\in \cF^c(p),\exists q \hbox{ homoclinic to $\cF^c(p)$ such that } \tilde g_{q}(x)\neq \pm\tilde v^s(x)\right\}.
\end{equation}
In a similar way we define $\mathcal U_u^r$. Let $\mathcal U^r=\mathcal U_s^r\cap\mathcal U_u^r$.
\end{definition}

The proof of Theorem \ref{main.homoclinic intersection} is divided in the following 3 propositions.

\begin{proposition}\label{prop:open}
$\mathcal U^r$ is $C^2$ open.
\end{proposition}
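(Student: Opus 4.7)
The plan is to reduce to showing each of $\mathcal{U}_s^r$ and $\mathcal{U}_u^r$ is $C^2$ open (they are symmetric in nature), and then to run a straightforward finite-cover plus continuous-dependence argument on the fixed Anosov center leaf.

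First, I would fix $f\in \mathcal{U}_s^r$. For each point $x\in\cF^c_f(p_f)$, the definition gives a homoclinic point $q_x$ with $\tilde g_{q_x}(x)\neq\pm \tilde v^s(x)$ in the unit tangent circle $T^1_x\cF^c_f(p_f)$. Both $\tilde g_{q_x}$ and $\tilde v^s$ are continuous maps from $\cF^c_f(p_f)$ into the unit tangent bundle (recall $\tilde v^s$ is in fact $C^1$ because $f|_{\cF^c(p)}$ is $C^2$ Anosov on a surface, and $\tilde g_{q_x}$ is $C^0$ by Lemma \ref{le:contc1}). Hence the inequality persists in an open neighborhood $U_x\subset\cF^c_f(p_f)$ of $x$. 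Since $\cF^c_f(p_f)$ is a compact $2$-torus, I would extract a finite subcover $U_{x_1},\ldots,U_{x_N}$ with associated homoclinic points $q_1,\ldots,q_N$.

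Next, I would use the continuation structure set up in Section 2.2: for $f'$ in a $C^2$-neighborhood of $f$, the fixed point $p_{f'}$, the Anosov center leaf $\cF^c_{f'}(p_{f'})$, the homoclinic center leaves $\cF^c_{f'}(q_i(f'))$, and the homoclinic holonomies $\tilde h_{q_i(f'),f'}$ all have continuations, and the induced maps $\tilde g_{q_i(f'),f'}$ depend continuously on both $y\in \cF^c_{f'}(p_{f'})$ and $f'$ in the $C^2$ topology. The stable direction $\tilde v^s_{f'}$ of the surface Anosov diffeomorphism $f'|_{\cF^c_{f'}(p_{f'})}$ likewise varies continuously, by the structural stability of surface Anosov systems (which is contained in the $C^1$ continuity of the Oseledets/hyperbolic splittings for the $2$-bunched leaf $\cF^c_{f'}(p_{f'})$). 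To compare these objects on the varying center leaf, I would fix once and for all a continuous family of $C^1$ identifications $\Phi_{f'}:\cF^c_f(p_f)\to \cF^c_{f'}(p_{f'})$ coming from the leaf conjugacy of Proposition \ref{p.center classification} (or from a tubular holonomy transverse to $\cF^c$), so that $\Phi_{f'}^*\tilde g_{q_i(f'),f'}\to \tilde g_{q_i}$ and $\Phi_{f'}^*\tilde v^s_{f'}\to \tilde v^s$ uniformly as $f'\to f$.

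Finally, I would conclude: since the inequality $\tilde g_{q_i}\neq\pm \tilde v^s$ is a closed-condition-failure on the open set $U_{x_i}$, uniform closeness implies that for $f'$ $C^2$-close enough to $f$ we still have $\tilde g_{q_i(f'),f'}(y)\neq \pm \tilde v^s_{f'}(y)$ for every $y\in \Phi_{f'}(U_{x_i})$, and these sets cover $\cF^c_{f'}(p_{f'})$. Hence $f'\in \mathcal{U}_s^r$, and the symmetric argument gives $f'\in \mathcal{U}_u^r$, so $\mathcal{U}^r$ is $C^2$ open.

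The only step that is not purely routine is the identification of the varying center leaves: one must ensure that the parameterizations $\Phi_{f'}$ can be chosen so that the pullbacks of both $\tilde g_{q_i(f'),f'}$ and $\tilde v^s_{f'}$ converge uniformly on the leaf. This is where the hypotheses of the paper do the work — the $C^2$ regularity of $\cF^c(p_f)$ coming from the $2$-bunching \eqref{local1}, the $C^2$-continuous dependence of the center-unstable/center-stable leaves of $p_f$ on $f$, and Lemma \ref{le:contc1} together provide exactly the uniform continuity needed. Once this is granted, the openness is a finite-cover exercise.
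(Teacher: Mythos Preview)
Your proposal is correct and follows essentially the same approach as the paper. The paper packages your finite-cover step as the equivalent criterion that finitely many homoclinic points $q_1,\dots,q_k$ exist with the image of $\tilde g_{q_1}\times\cdots\times\tilde g_{q_k}$ disjoint from that of $(\pm\tilde v^s)^k$ (Lemma~\ref{le:finite}), and then invokes the $C^1$-continuous dependence of holonomies on $f$ in the $C^2$ topology (Lemma~\ref{le:contc1}) to conclude these compact images stay disjoint under perturbation; this is exactly your argument, with the leaf-identification handled implicitly rather than via an explicit $\Phi_{f'}$.
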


\begin{proof}

An immediate consequence of the compactness of $\cF^c(p)$ and of the fact that the stable and unstable holonomies depend continuously in the $C^1$ topology with respect to the points (see Remark \ref{bunch}) is the following lemma.

\begin{lemma}\label{le:finite}
Let $f\in\mathcal {SH}^r_b$. Then $f\in\mathcal U_s^r$ if and only if there exist $q_1,q_2,\dots q_k$ homoclinic points of $\cF^c(p)$ such that the image of $\tilde g_{q_1}\times \tilde g_{q_2}\times\dots\times \tilde g_{q_k}$ is disjoint from the image of $\pm\tilde v^{s^k}$.
\end{lemma}

On the other hand, the holonomies along the center leaves depend continuous in the $C^1$ topology with respect to the map $f$ (in $C^2$ topology), so the images of $\tilde g_{q_1}\times \tilde g_{q_2}\times\dots\times \tilde g_{q_k}$ and $\pm\tilde v^{s^k}$ depend continuously on the map $f$. Since these images are compact, this concludes the $C^2$ openness of $\mathcal U_s^r$. The proof for $\mathcal U_u^r$ is similar, so $\mathcal U^r=\mathcal U_s^r\cap\mathcal U_u^r$ is $C^2$ open.
\end{proof}

\begin{proposition}\label{prop:dense}
$\mathcal U^r$ is $C^r$ dense.
\end{proposition}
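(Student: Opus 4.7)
The plan is to show that $\mathcal U_s^r$ is $C^r$ dense in $\mathcal{SH}^r_b$; the corresponding statement for $\mathcal U_u^r$ follows by the same argument with the roles of stable and unstable exchanged, and then by Proposition~\ref{prop:open} both sets are $C^2$ open (hence $C^r$ open), so by Baire their intersection $\mathcal U^r$ is also $C^r$ dense in $\mathcal{SH}^r_b$. Fix $f \in \mathcal{SH}^r_b$ and $\epsilon > 0$, and introduce the \emph{bad set}
\[
X(f) := \bigl\{ x \in \cF^c(p) : \tilde g_q(x) = \pm \tilde v^s(x) \text{ for every homoclinic center leaf } \cF^c(q) \bigr\},
\]
a closed subset of the compact Anosov center leaf $\cF^c(p)$. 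By definition $f \in \mathcal U_s^r$ iff $X(f) = \emptyset$, so we assume $X(f) \neq \emptyset$ and aim to produce $g$ with $\|g - f\|_{C^r} < \epsilon$ and $X(g) = \emptyset$.

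The local mechanism proceeds as follows. For each $x_0 \in X(f)$, fix any homoclinic center leaf $\cF^c(q_0)$ and let $y_0 \in \cF^c(q_0)$ be the unique preimage of $x_0$ under $h^u_{q_0,p}$, joined to $x_0$ by a strong unstable arc $\gamma \subset \cF^{cu}(p)$. Choose a small flow-box $U$ containing a compact sub-arc of $\gamma$ and disjoint from $\cF^c(p) \cup \cF^c(q_0)$, and construct a $C^r$-small perturbation $g = \varphi \circ f$ with $\varphi$ supported in $U$, designed as a shear whose profile is constant along strong unstable leaves but varies in the transverse center direction. Such a shear modifies the strong unstable foliation inside $\cF^{cu}(p)$, hence alters the unstable holonomy $h^u_{q_0,p}$ at $x_0$ by a definite amount, producing a controlled tilt of $\tilde g_{q_0,g}(x_0)$ away from $\pm \tilde v^s(x_0)$. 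The tilt direction is dictated by the profile of $\varphi$, so it can be chosen to land outside $\{\pm \tilde v^s(x_0)\}$; by continuity in $x$ (via Lemma~\ref{le:contc1}) this property persists on an open neighborhood $V_0 \subset \cF^c(p)$ of $x_0$. Since $\mathcal{SH}^r_b$ is $C^1$ open and $p$ stays fixed away from $U$, for $\|g-f\|_{C^r}$ small enough $g$ remains in $\mathcal{SH}^r_b$.

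To globalize, cover $\cF^c(p)$ by finitely many such neighborhoods $V_0, \dots, V_m$ associated to distinct homoclinic center leaves $\cF^c(q_0), \dots, \cF^c(q_m)$ and pairwise disjoint flow-boxes $U_0, \dots, U_m$ in $\TT^4 \setminus \cF^c(p)$; this is possible because the iterates of any homoclinic orbit accumulate densely on $\cF^c(p)$, leaving ample room to choose disjoint perturbation supports. Performing all the shears simultaneously yields a single $g$ with $\|g-f\|_{C^r} < \epsilon$, still in $\mathcal{SH}^r_b$, such that for every $x \in \cF^c(p)$ some $\tilde g_{q_i, g}(x) \neq \pm \tilde v^s(x)$; hence $X(g)=\emptyset$ and $g \in \mathcal U_s^r$.

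The main obstacle is the quantitative analysis of the local shear: one must certify that a shear of arbitrarily small $C^r$ norm produces a definite, computable rotation of $\tilde g_{q_0, g}(x_0)$. Here the bunching assumption \eqref{local2} is essential. It is precisely the hypothesis that makes the transfer operator $T$ from the proof of Lemma~\ref{le:contc1} a strict fiber contraction over $\cF^c(p)$, so a localized modification of the projectivized derivative cocycle propagates to a convergent, non-vanishing modification of the unique invariant section, i.e.\ of $Dh^u_{f,\cdot,\cdot*}$. Consequently the size of the twist imparted to $\tilde g_{q_0,g}(x_0)$ is comparable to (rather than swamped by) the $C^r$ size of $\varphi$, so one can tune the amplitudes of the shears to achieve both $\|g-f\|_{C^r} < \epsilon$ and a definite $C^0$ angular separation between $\tilde g_{q_i, g}$ and $\pm \tilde v^s$ on each $V_i$.
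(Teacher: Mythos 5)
There is a genuine gap at the globalization step. Your local mechanism produces, for each $x_i$ in the bad set, a one--parameter family of perturbations whose amplitude $t$ tilts $\tilde g_{q_i,g}(x_i)$ off $\pm\tilde v^s(x_i)$ at a nonzero rate. That secures the condition \emph{at the point $x_i$} for all small $t\neq 0$, but not on a neighborhood $V_i$ that can be fixed in advance: at a nearby $x$, writing (schematically) $\tilde g_{q_i,f\circ\phi_t}(x)-\bigl(\pm\tilde v^s(x)\bigr)\approx d(x)+c_i(x)t$, the condition for $q_i$ fails precisely when $t\approx -d(x)/c_i(x)$. For each fixed $x$ this is a negligible set of amplitudes, but the union over the two--dimensional leaf $\cF^c(p)$ of these bad amplitudes can cover a whole interval around $0$: the bad set $\{(x,t):\tilde g_{q_i,f\circ\phi_t}(x)=\pm\tilde v^s(x)\}$ has codimension $1$ in a $3$--dimensional space, so its projection to the $t$--axis need not be null. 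Hence "performing all the shears simultaneously" with small amplitudes may leave, for every admissible choice of amplitudes, some $x\in V_i$ where $\tilde g_{q_i,g}(x)=\pm\tilde v^s(x)$, and nothing in your scheme guarantees another $q_j$ rescues that $x$. This is exactly why the paper uses \emph{three} homoclinic points $q_1,q_2,q_3$ and a $3K$--parameter family: the simultaneous failure of all three conditions is a codimension--$3$ phenomenon, which exceeds $\dim\cF^c(p)=2$, so the bad parameter set is the image under projection of a $(3K-1)$--dimensional submanifold and is Lebesgue--null by Sard's theorem. Your argument is missing this transversality/Sard mechanism, and with a single homoclinic point per chart element it cannot be repaired.

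A secondary but nontrivial issue is the local step itself. You assert that a shear supported on the connecting unstable arc produces a tilt of $\tilde g_{q_0,g}(x_0)$ "comparable to the $C^r$ size of $\varphi$," invoking the fiber contraction from Lemma~\ref{le:contc1}; but that contraction only yields continuity of the invariant section, not a lower bound on the derivative of $\tilde g$ with respect to the perturbation parameter, and a perturbation on the arc influences the holonomy through all forward iterates, which you do not control. The paper instead perturbs by a rotation in the center direction supported at the homoclinic leaf $\cF^c(q)$ itself (disjoint from its other iterates), characterizes $\tilde h_{q(f_T),f_T}$ implicitly, and computes $Dh_t(a_0)=L_b^{-1}\circ R_t\circ L_a$ via the implicit function theorem (this is where the $C^2$ regularity of the holonomies, i.e.\ condition~\eqref{local2} and $f\in\mathcal{SH}^3_b$, is actually used). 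You would need an analogous explicit computation to certify \eqref{eq:dnezero} for your shear.
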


We will give the proof of the proposition in the subsection \ref{ss:density}.

\begin{proposition}\label{prop:related}
If $f\in\mathcal U^r$, then any two hyperbolic periodic points of $f$ of index 2 are homoclinically related.
\end{proposition}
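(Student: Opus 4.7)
The plan is to show that every hyperbolic periodic point of index~$2$ is homoclinically related to the periodic orbits of the Anosov restriction $f|_{\Sigma}$, where $\Sigma:=\cF^c_f(p_f)$. First, every periodic point $p_1\in\Sigma$ is hyperbolic of stable index~$2$ in $\TT^4$ (adding the global $E^s$ contribution), and any two such $p_1,p_2$ are globally homoclinically related: given a transverse intersection $y\in W^s_{f|\Sigma}(p_1)\pitchfork W^u_{f|\Sigma}(p_2)$ inside $\Sigma$, the global manifolds $W^s_f(p_i)=\cF^{ss}(W^s_{f|\Sigma}(p_i))$ and $W^u_f(p_i)=\cF^{uu}(W^u_{f|\Sigma}(p_i))$ give
\[
T_yW^s_f(p_1)+T_yW^u_f(p_2)=E^s(y)\oplus E^c(y)\oplus E^u(y)=T_y\TT^4
\]
by a direct dimension count.

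For an arbitrary hyperbolic periodic point $q$ of index~$2$, subordination of the Oseledets splitting to the partially hyperbolic one forces one center Lyapunov exponent to be positive and one negative, so $W^u_f(q)$ is $2$-dimensional and $\cF^{uu}$-saturated, and $W^s_f(q)$ is $2$-dimensional and $\cF^{ss}$-saturated. The argument is symmetric under $f\leftrightarrow f^{-1}$ (using $\cU^r_u$ in place of $\cU^r_s$), so I focus on producing a transverse intersection $W^u_f(q)\pitchfork W^s_f(p_1)$ for some periodic $p_1\in\Sigma$. By Shub's transitivity (Theorem~\ref{Shub}) and density properties of the strong unstable foliation in the Shub class, $W^u_f(q)$ meets the $3$-dimensional leaf $\cF^{cs}_f(p_f)$; the intersection is automatically transverse in $\TT^4$ (its tangent span contains $E^s\oplus E^c\oplus E^u$), so it is a $1$-dimensional $C^1$ curve $\eta$ tangent at each point to a direction $v^u_c(\cdot)\in E^c$.

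Let $\pi^{ss}:\cF^{cs}_f(p_f)\to\Sigma$ be the strong stable projection, $C^1$ by Remark~\ref{bunch}, and set $\gamma:=\pi^{ss}(\eta)\subset\Sigma$, a $C^1$ curve with unit tangent $\hat v(\cdot)\in T^1\Sigma$. Since $f$ commutes with $\pi^{ss}$, iterating $\eta$ under $f$ corresponds to iterating $\gamma$ under $f|_{\Sigma}$. The crucial dichotomy is whether $\hat v\ne \pm\tilde v^s$ somewhere on $\gamma$ or $\hat v\equiv\pm\tilde v^s$ identically. In the former case, $\gamma$ is transverse to the stable foliation of the Anosov map $f|_{\Sigma}$ at some point, and forward iteration stretches $\gamma$ exponentially along the unstable direction, so $(f|_{\Sigma})^n(\gamma)$ eventually crosses $W^s_{f|\Sigma}(p_1)$ transversely inside $\Sigma$ for every periodic $p_1\in\Sigma$; the $\cF^{ss}$-saturation of $W^s_f(p_1)$ then lifts this to a transverse intersection $W^u_f(q)\pitchfork W^s_f(p_1)$ in $\TT^4$ by the same dimension count as in the first paragraph.

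In the remaining case $\hat v\equiv\pm\tilde v^s$, the hypothesis $f\in\cU^r_s$ allows us to break the tangency: fix $x_0\in\gamma$ and choose a homoclinic point $q'$ of $\Sigma$ with $\tilde g_{q'}(x_0)\ne\pm\tilde v^s(x_0)$. Using density of $W^u_f(q)$, one locates another component of $W^u_f(q)\cap\cF^{cs}_f(p_f)$ running near the homoclinic orbit through $q'$; a $\lambda$-lemma argument along the $\cF^{cu}$ direction combined with the $C^1$-continuity of the projectivized holonomies (Lemma~\ref{le:contc1}) identifies the new projected tangent at a point close to $x_0$ with precisely $\tilde g_{q'}(x_0)$, returning us to the transverse case above. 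The principal technical obstacle is exactly this last step, namely concretely realizing the directional shift $\tilde g_{q'}$ by an actual piece of $W^u_f(q)\cap\cF^{cs}_f(p_f)$ rather than by an abstract holonomy loop; the continuity lemma for the projectivized holonomies is the key tool for transferring the directional data from the idealized limit to the approximating iterates.
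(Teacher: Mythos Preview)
Your overall architecture is the same as the paper's: reduce to linking an arbitrary index-$2$ periodic point with the fixed Anosov leaf $\Sigma=\cF^c(p)$, analyze the tangent direction of $W^u(q)\cap\cF^{cs}(p)$ after stable projection to $\Sigma$, handle the transverse case directly, and in the tangent case invoke the $\cU^r_s$ hypothesis via a homoclinic holonomy. The transverse case and the preliminary reductions are fine.

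The genuine gap is precisely where you flag it. You fix a point $x_0\in\gamma$, choose $q'$ with $\tilde g_{q'}(x_0)\neq\pm\tilde v^s(x_0)$, and then appeal to ``density of $W^u_f(q)$'' and a ``$\lambda$-lemma along $\cF^{cu}$'' to produce a new component of $\eta$ whose projected tangent near $x_0$ equals $\tilde g_{q'}(x_0)$. Neither tool gives you this. Density alone only places pieces of $W^u(q)$ near $\cF^c(q')$; it says nothing about which \emph{direction} they carry, and there is no reason the projected tangent of an arbitrary nearby component should be governed by $\tilde g_{q'}$ evaluated at your pre-chosen $x_0$. The $\lambda$-lemma is likewise not the mechanism: nothing is being iterated here toward an unstable manifold. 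What is missing is a concrete way to \emph{transport} an existing component of $\eta$ through the homoclinic loop so that it remains inside $W^u(q)\cap\cF^{cs}(p)$ and so that the effect on its tangent direction is literally $D\tilde h_{q'*}$.

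The paper supplies exactly this mechanism, and it hinges on working with the \emph{strong} unstable leaf $\cF^u(x)$ rather than the full $W^u(q)$, and on an accumulation point rather than a point on $\gamma$. One takes a sequence $p_n\in\cF^u(x)\cap\cF^{cs}_{\mathrm{loc}}(\Sigma)$ with $p_n\to p_0\in\Sigma$, applies the $\cU_s$ condition at the \emph{limit} $p_0$ to get $q$ with $\tilde g_q(p_0)\neq\pm\tilde v^s(p_0)$, and then pushes each $p_n$ by the local strong-unstable holonomy $h^u_{\mathrm{loc}}$ towards $\cF^c(q)$, obtaining $q_n\to q_0=h^u_{p,q}(p_0)$. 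The crucial point is that $q_n\in\cF^u(p_n)=\cF^u(x)\subset W^u(x)$, so the $q_n$ are \emph{new} points of $\cF^u(x)\cap\cF^{cs}(p)$ and the no-transverse-intersection hypothesis applies to them as well. Comparing the two tangency conditions at $p_n$ and $q_n$, using $h^u_{x,q_n}=h^u_{p_n,q_n}\circ h^u_{x,p_n}$, and passing to the limit via Lemma~\ref{le:contc1} forces $\tilde g_q(p_0)=\pm\tilde v^s(p_0)$, the desired contradiction. In short: the ``directional shift'' is realized not by locating an independent component near $q'$, but by sliding the same strong-unstable leaf along itself through the homoclinic loop; this is what guarantees both membership in $W^u(x)$ and the precise formula for the new tangent.
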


We will give the proof of the proposition in the subsection \ref{ss:hr}

\subsection{Proof of $C^r$ density}\label{ss:density}

We will show that $\mathcal U_s^r$ is $C^r$ dense in $\mathcal {SH}^r_b$, the proof for $\mathcal U_u^r$ is similar. Then $\mathcal U^r$ will be $C^r$ dense as the intersection of two $C^r$ open dense sets.

The main perturbation result which we will use is the following Lemma.

\begin{lemma}\label{le:perturbation}
Let $f\in\mathcal {SH}^3_b$. Let $q$ be a homoclinic point of the fixed Anosov leaf $\cF^c(p)$. Then
\begin{enumerate}
\item
For any $C^2$ family of perturbations $(\phi_T)_{T\in\mathbb R^n}$ of the identity on $\mathbb T^4$ ($\phi_{0^n}=Id_{\mathbb T^4}$), supported in a neighborhood of $\cF^c(q)$ disjoint from all the other iterates $f^k(\cF^c(q)),\ k\in\mathbb Z\setminus\{0\}$, the map  $(T,x)\mapsto \tilde g_{q(f\circ\phi_T),f\circ\phi_T}(x)$ is $C^1$ on $[-\delta,\delta]^n\times \cF^c(p)$, for some $\delta>0$.
\item
For any $x_0\in \cF^c(p)$ and any $r_0>0$ there exists a $C^{\infty}$ family $(\phi_t)_{t\in[-\delta,\delta]}$ of (volume-preserving) perturbations of the identity on $\mathbb T^4$, supported in $B(y_0,r_0)$ where $y_0:=h_{p,q}^u(x_0)\in \cF^c(q)$, such that 
\begin{equation}\label{eq:dnezero}
\frac{\partial}{\partial t}\tilde g_{q(f\circ\phi_t),f\circ\phi_t}(x)\mid_{(x,t)=(x_0,0)}\neq 0.
\end{equation}
\end{enumerate}
\end{lemma}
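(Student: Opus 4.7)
The plan splits into two essentially independent pieces: part (1) is a parametric regularity statement obtained by running the invariant-section machinery of Lemma~\ref{le:contc1} with parameters, while part (2) is an explicit existence statement that requires one carefully chosen volume-preserving perturbation with nondegenerate first-order effect.

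For part (1) I would first shrink the neighborhood of $\cF^c(q)$ that supports the family $\phi_T$ so that it is also disjoint from $\cF^c(p)$. Then $\tilde f := f \circ \phi_T$ agrees with $f$ on a full neighborhood of $\cF^c(p)$, so $\cF^c_{\tilde f}(p) = \cF^c(p)$ and $\tilde f|_{\cF^c(p)} = f|_{\cF^c(p)}$; in particular the Anosov stable vector field $\tilde v^s$ is independent of $T$. Persistence of normally hyperbolic center leaves together with the $3$-bunching hypothesis at $p$ produces $C^2$ continuations $\cF^c_{\tilde f}(\tilde q)$, $\cF^{cs}_{\tilde f}(p)$ and $\cF^{cu}_{\tilde f}(p)$ that vary $C^1$ in $T$, using that the family $\phi_T$ is $C^2$. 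Running the graph transform of Lemma~\ref{le:contc1} parametrically on the resulting $T$-dependent bundles and invoking the $C^k$-section theorem of \cite{HPS} (the fiber contraction times base Lipschitz rate is still $<1$, and the bundle map depends $C^1$ on $T$), one deduces that the invariant section — and hence $Dh^s_{p,q,\tilde f}$ and $Dh^u_{q,p,\tilde f}$ — depends $C^1$ on $T$ in the $C^0$ topology on the relevant leaves. Composing with the unchanged $\tilde v^s$ and normalizing yields the claimed joint $C^1$ regularity of $(T,x)\mapsto\tilde g_{q(\tilde f),\tilde f}(x)$.

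For part (2), set $z_0:=\tilde h_q^{-1}(x_0)\in\cF^c(p)$ so that $y_0 = h^s_{p,q,f}(z_0) = h^u_{p,q,f}(x_0)$, and write
\[
V(t) := Dh^u_{q,p,\tilde f}(y_0(t)) \circ Dh^s_{p,q,\tilde f}(z_0(t))\, \tilde v^s(z_0(t)),
\]
whose direction is $\tilde g_{q,\tilde f}(x_0)$. The desired conclusion $\tfrac{\partial}{\partial t}\tilde g|_{t=0}\neq 0$ is equivalent to $V'(0)\not\parallel V(0)$. I would take $\phi_t$ to be the time-$t$ flow of a smooth divergence-free vector field $X$ supported in $B(y_0,r_0)$ with $X(y_0)=0$, choosing the $1$-jet $DX(y_0)$ inside the two-parameter family of traceless linear maps whose only nonzero block is $E^u\to E^c$ inside $\cF^{cu}(p)$ (the divergence-free constraint is the vanishing of the trace, which is automatic for this off-diagonal block). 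At first order in $t$, the bundle $E^u_{\tilde f}(y_0)$ rotates within $\cF^{cu}(p)$ by $\pi_{E^c(y_0)}DX(y_0) e^u$, and propagating this infinitesimal tilt through the fiber-contraction fixed point of Lemma~\ref{le:contc1} modifies $Dh^u_{q,p,\tilde f}(y_0)$ by a matching first-order term. Since the resulting linear assignment from the two-parameter space of admissible jets to $V'(0)\in T_{x_0}\cF^c(p)\simeq\mathbb{R}^2$ is nontrivial while the radial subspace $\mathbb{R}\,V(0)$ is one-dimensional, some choice of $X$ forces $V'(0)\not\parallel V(0)$.

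The main obstacle is precisely this nondegeneracy step in part (2): excluding the a priori possibility that every jet $DX(y_0)$ in the admissible family gives only a rescaling of $V(0)$. The dimension count above settles it; the underlying mechanism is that $Dh^u_{q,p,f}(y_0)$ is a linear isomorphism between $T_{y_0}\cF^c(q)$ and $T_{x_0}\cF^c(p)$, so an infinitesimal $E^u$-rotation within $\cF^{cu}(p)$ produces a genuinely two-parameter family of first-order changes of $V$, which cannot all be multiples of the single vector $V(0)$.
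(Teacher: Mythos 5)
Your part (2) does not work: the perturbation you propose has \emph{zero} first--order effect on $\tilde g$ at $x_0$. Let $X$ be divergence--free with $X(y_0)=0$ and $DX(y_0)$ supported on the block $E^u\to E^c$, and $\phi_t$ its flow, so $D\phi_t(y_0)=I+tDX(y_0)$. First, $E^u_{f\circ\phi_t}(y_0)=E^u_f(y_0)$: the unstable bundle at $y_0$ is determined by the backward orbit, which leaves $\supp\phi_t$ after one step and never returns, so nothing ``rotates'' at $y_0$ (what tilts is $E^u$ at $f(y_0)$ and its forward images, which play no role in $h^u_{q,p}$ or $h^s_{p,q}$). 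More to the point, the objects that the homoclinic holonomy actually sees near $y_0$ --- the continuation of the leaf $\cF^c(q)$ and the stable plaques of points of $\cF^c(p)$ --- are displaced by $-tX(w)+O(t^2)=-tDX(y_0)(w-y_0)+O\bigl(t|w-y_0|^2\bigr)$, and since these objects are tangent at $y_0$ to subspaces of $E^c(y_0)\oplus E^s(y_0)=\ker DX(y_0)$, their displacement is $O\bigl(t\,|w-y_0|^2\bigr)$: position and tangent plane at $y_0$ are unchanged to first order. Hence $D\tilde h_{q(f\circ\phi_t)}$ at the relevant point, and therefore $\tilde g_{q(f\circ\phi_t)}(x_0)$, has vanishing $t$--derivative at $t=0$, and \eqref{eq:dnezero} fails. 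Analytically: writing the holonomy implicitly in adapted charts as the paper does, $E(a,b,r,s,t)=\psi_q(\phi_t(\beta^u(a,r)))-\psi_q(f^{-1}(\beta^s(b,s)))=0$, your jet perturbs only the column $\partial E/\partial r$ (adding $t$ times a vector in the $E^c$--plane), while $\partial E/\partial a$ and $\partial E/\partial(b,s)$ are unchanged because $D\phi_t(y_0)|_{E^c(y_0)}=\mathrm{id}$; solving the resulting triangular system gives $\partial h_t/\partial a(a_0)=-L_b^{-1}L_a$ for all $t$. So the linear map from your two--parameter family of admissible jets to $V'(0)$ is identically zero, and the dimension count settles the nondegeneracy in the wrong direction. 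The jet that works is the complementary one: $D\phi_t(y_0)$ must act nontrivially on $E^c(y_0)$ --- the paper takes an infinitesimal rotation of the center plane (also traceless, hence realizable volume--preservingly), which turns $\partial E/\partial a$ into $R_tL_a$ and yields $Dh_t(a_0)=L_b^{-1}R_tL_a$, whence non--collinearity of $Dh_0(a_0)v^s$ and $\partial_t Dh_t(a_0)v^s|_{t=0}=L_b^{-1}R_{\pi/2}L_av^s$.

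Part (1) also has a gap, though a reparable one. Running the invariant--section argument of Lemma~\ref{le:contc1} with the parameter $T$ gives, at best, that $Dh^s_{p,q}$ and $Dh^u_{q,p}$ depend $C^1$ on $T$ \emph{with values in continuous functions of the base point}; this yields continuity in $(T,x)$ and differentiability in $T$ for fixed $x$, but not the joint $C^1$ regularity of $(T,x)\mapsto\tilde g(x)$ that the lemma asserts and that the density proof requires (Sard's theorem is applied there to a map that must be jointly $C^1$ in $(x,T)$ to make $H^{-1}(0^3)$ a $C^1$ submanifold). Joint regularity needs in particular $x\mapsto D\tilde h_q(x)$ to be $C^1$, i.e.\ the homoclinic holonomy to be $C^2$ along $\cF^c(p)$, uniformly in $T$ --- this is exactly where $f\in\mathcal{SH}^3_b$ and the stronger local bunching \eqref{local2} enter, providing $C^2$ strong foliation charts inside $\cF^{cs}(p)$ and $\cF^{cu}(p)$. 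The paper obtains everything at once by solving the $C^2$ equation $E=0$ above with the implicit function theorem, which gives $h_T(a)$ jointly $C^2$ in $(a,T)$ and hence $\tilde g$ jointly $C^1$; if you want to keep the graph--transform route you would need a genuinely parametric $C^1$--section theorem over the base variable $x$ as well, which your sketch does not supply.
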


\begin{proof}
{\bf Part (1).} Since $\cF^c(p)$ is compact, it is enough to prove that $\tilde g_{q(f\circ\phi_t),f\circ\phi_t}(x)$ is $C^1$ in $(x,T)$ in a small neighborhood of every point $(x_0,0^n)\in \cF^c(p)\times\mathbb R^n$.

Let $x_0\in \cF^c(p)$ and denote $y_0=h_{p,q}^u(x_0)\in \cF^c(q)$, $y_1=f(y_0)\in \cF^c(f(q))$, $z_1=h_{f(q),p}^s(y_1)\in \cF^c(p)$ and $z_0=f^{-1}(z_1)\in \cF^c(p)$. The $f$-invariance of the stable holonomy implies that $h_{q,p}^s(y_0)=z_0$, or $\tilde h_q(z_0)=x_0$.

Let $\psi_p:\mathbb T^2\rightarrow \cF^c(p)$ be a $C^2$ embedding, $a_0=\psi_p^{-1}(x_0),\ b_0=\psi_p^{-1}(z_0)$. Let $I_{\delta}=[-\delta,\delta]$. There exist $C^2$ foliations charts of $\cF^s$ (respectively $\cF^u$) on a small neighborhood of $y_1$ (respectively $y_0$) inside $\cF^{cs}(p)$ (respectively $\cF^{cu}(p)$):
\begin{eqnarray*}
\alpha^s:B^c_{y_1}\times I_{\delta}\rightarrow B^{cs}_{y_1},&\ \alpha^s(\cdot,0)=Id_{\cF^c_{loc}(y_1)},\ \alpha^s(\{y\}\times I_{\delta})=\cF^s_{loc}(y),\ \forall y\in B^c_{y_1};\\
\alpha ^u:B^c_{y_0}\times I_{\delta}\rightarrow B^{cu}_{y_0},&\ \alpha^u(\cdot,0)=Id_{\cF^c_{loc}(y_0)},\ \alpha^u(\{y\}\times I_{\delta})=\cF^u_{loc}(y),\ \forall y\in B^c_{y_0},
\end{eqnarray*}
where $B_x^*$ denotes a small ball centered in $x$ inside $\cF^*(x)$. Define the $C^2$ maps
\begin{eqnarray*}
\beta^s:B_{b_0}\times I_{\delta}\rightarrow B^{cs}_{y_1},&\ \beta^s(b,s)=\alpha^s(h^s_{p,f(q)}(f(\psi_p(b))),s);\\
\beta ^u:B_{a_0}\times I_{\delta}\rightarrow B^{cu}_{y_0},&\ \beta^u(a,r)=\alpha^u(h_{p,q}^u(\psi_p(a)),r),
\end{eqnarray*}
where $B_x$ is a small ball centered at $x$ in $\mathbb T^2$.

We know that the support of $\phi_T$ does not intersect $f^k(\cF^{cs}_{loc}(y_1)), \forall k\geq 0$ and $f^l(\cF^{cu}_{loc}(y_0)), \forall l<0$. This implies that $\cF^{cs}_{loc}(y_1)$ remains a local center-stable leaf for $f_T:=f\circ\phi_T$ for all $T$, $\beta^s(a,\cdot)$ remain parametrizations of the strong stable manifolds inside $\cF^{cs}_{loc}(y_i)$, $\cF^{cs}_{loc}(y_1)$ remains inside $\cF^{cs}(p)$ and the stable holonomy between $\cF^c(p)$ and $\cF^{cs}_{loc}(y_1)$ is unchanged. A similar statement holds for $\cF^{cu}_{loc}(y_0)$.

The maps $f_T$ do change the center leaves $\cF^c(q)$, we have that $\cF^c(q(f_T),f_T)=f^{-1}(\cF^s_{loc}(\cF^c(y_1)))\cap \cF^u_{loc}(\cF^c(y_0))$. We can in fact compute implicitly the homoclinic stable-unstable holonomy $\tilde h_{q(f_T), f_T}$ in a neighborhood of $z_0$ in the following way:
\begin{eqnarray*}
\psi_p(a)=\tilde h_{q(f_T),f_T}(\psi_p(b)) & \iff &  h^u_{p,q(f_T),f_T}(\psi_p(a))=h^s_{p,q(f_T),f_T}(\psi_p(b))\\
& \iff & h^u_{p,q(f_T),f_T}(\psi_p(a))=f_T^{-1}(h^s_{p,q(f_T),f_T}(f(\psi_p(b)))\\
&\iff & \beta^u(a,r)=f_T^{-1}(\beta^s(b,s)) \hbox{ for some } r,s\in I_{\delta}\\
&\iff & \phi_T(\beta^u(a,r))=f^{-1}(\beta^s(b,s)) \hbox{ for some } r,s\in I_{\delta}.
\end{eqnarray*}
In conclusion, denoting $h_T=\psi_p^{-1}\circ \tilde h_{q(f_T),f_T}\circ\psi_p$ (the map $\tilde h_{q(f_T),f_T}$ in the chart $\psi_p$) we have
\begin{equation}
a=h_T(b)\iff  \phi_T(\beta^u(a,r))=f^{-1}(\beta^s(b,s)) \hbox{ for some } r,s\in I_{\delta}.
\end{equation}

We choose a $C^{\infty}$ chart $\psi_q:B_{y_0}\rightarrow\mathbb R^4$ (can also be volume preserving) such that
\begin{itemize}
\item $\psi_q(y_0)=0^4$;
\item $D\psi_q(y_0)(E^c(q_0))=sp\{e_1,e_2\}$;
\item $D\psi_q(y_0)(E^u(q_0))=sp\{e_3\}$;
\item $D\psi_q(y_0)(E^s(q_0))=sp\{e_4\}$.
\end{itemize}

Let $E:B_{a_0}\times B_{b_0}\times I_{\delta}^{n+2}\rightarrow\mathbb R^4$,
\begin{equation}
E(a,b,r,s,T)=\psi_q(\phi_T(\beta^u(a,r)))-\psi_q(f^{-1}(\beta^s(b,s))).
\end{equation}
We have that $E$ is $C^2$ and $E(a_0,b_0,0,0,0^n)=\psi_q(y_0)-\psi_q(f^{-1}(y_1))=0$.

{\bf Claim:} $\frac{\partial E}{\partial (b,r,s)}(a_0,b_0,0,0,0^N)$ is invertible.\\
We observe that since $\alpha^s$ is a diffeomorphism such that $\alpha^s(\{y\}\times I_{\delta})=\cF^s_{loc}(y)$, we have that $D\alpha(y,0)\cdot\frac{\partial}{\partial s}$ is a nonzero vector in $E^s(y)$. Since $Df$ preserves $E^s$ and $D\psi_q(y_0)$ takes $E^s(q_0)$ to the line generated by $e_4$, we have that $DE(a_0,b_0,0,0,0^n)$ takes the line generated by $\frac{\partial}{\partial s}$ isomorphically to the line generated by $e_4$.

A similar argument shows that $DE(a_0,b_0,0,0,0^n)$ takes the line generated by $\frac{\partial}{\partial r}$ isomorphically to the line generated by $e_3$ (remember that $\phi_{0^n}=Id_{\mathbb T^4}$).

Now let us analyze the action of $DE(a_0,b_0,0,0,0^n)$ on the two-dimensional space $T_{b_0}B_{b_0}$. It is not hard to see that $D\beta^s(b_0,0)$ takes $T_{b_0}B_{b_0}$ isomorphically to $E^c(y_1)$. Since $Df$ preserves $E^c$ and $D\psi_q(y_0)$ takes $E^c(q_0)$ to the plane generated by $e_1$ and $e_2$, we have that $DE(a_0,b_0,0,0,0^n)$ takes $T_{b_0}B_{b_0}$ isomorphically to the plane generated by $e_1$ and $e_2$. This concludes the proof of the claim.

Now let us finish the proof of the first part of the lemma. The Implicit Function Theorem gives us the existence of a $C^2$ function $H:B_{a_0}\times I_{\delta}^N\rightarrow B_{b_0}\times I_{\delta}^2$, $H(a,T)=(h(a,T), r(a,T),s(a,T))$ such that $E(a,h(a,T),r(a,T),s(a,T),T)=0$ (eventually by making smaller the balls and the intervals). Then the map $h_T(a)=h(a,T)$ is $C^2$ in both variables, which means that $\tilde h_{q(f_T), f_T}(x)$ is $C^2$ in both variables, and then $\tilde g_{q(f\circ\phi_T),f\circ\phi_T}(x)$ is $C^1$ in both variables. This finishes the proof of the first part.

{\bf Part (2).} We will use the same notations from part (1). Let $\rho:\mathbb R^4\rightarrow [0,\infty)$ be a smooth bump function supported on a small ball centered at the origin, and constantly equal to one near the origin. The family $\phi_t:\mathbb T^4\rightarrow\mathbb T^4$ is defined as
$$
\phi_t:=\psi_{q}^{-1}\circ (R_{\rho t}\times Id_{\mathbb R^2})\circ\psi_q,
$$
where $R_t$ is the rotation of angle $t$ in $\mathbb R^2$. Assume that the support of $\rho$ is small enough so that the support of $\phi_t$ is inside $B(y_0,r_0)$ and disjoint of all the other iterates of $W^c(q)$. From part (1) we have
\begin{equation}\label{eq:e}
E(a,b,r,s,t)=(R_{\rho t}\times Id|{\mathbb R^2})(\psi_q(\beta^u(a,r)))-\psi_q(f^{-1}(\beta^s(b,s)))
\end{equation}

We will compute $DE(a_0,b_0,0,0,t)$. Observe that $L_b:=DE(a_0,b_0,0,0,t)\mid_{T_{b_0}B_{b_0}}:T_{b_0}B_{B_0}\rightarrow sp\{e_1,e_2\}$ and $L_s:=DE(a_0,b_0,0,0,t)\mid_{sp\{\frac{\partial}{\partial s}\}}:sp\{\frac\partial{\partial s}\}\rightarrow sp\{e_4\}$ are isomorphisms independent of $t$. Since $D(R_{\rho t}\times Id_{\mathbb R^2})$ keeps $e_3$ invariant we have that also $L_r:=DE(a_0,b_0,0,0,t)\mid_{sp\{\frac{\partial}{\partial r}\}}:sp\{\frac\partial{\partial r}\}\rightarrow sp\{e_3\}$ is also an isomorphism independent of $t$, and
$$
\frac{\partial E}{\partial(b,r,s)}(a_0,b_0,0,0,t)=L_b\times L_r\times L_s.
$$
From \ref{eq:e} we can compute
$$
DE(a_0,b_0,0,0,t)\mid_{T_{a_0}B_{a_0}}=R_t\circ L_a:T_{a_0}B_{a_0}\rightarrow sp\{e_1,e_2\},
$$
where $L_a:=DE(a_0,b_0,0,0,0)\mid_{T_{a_0}B_{a_0}}:T_{a_0}B_{a_0}\rightarrow sp\{e_1,e_2\}$ is an isomorphism. From the Implicit Function Theorem we deduce that
\begin{equation}\label{eq:dht}
Dh_t(a_0)=L_b^{-1}\circ R_t\circ L_a.
\end{equation}

Define $g:B_{a_0}\times I_{\delta}\rightarrow\mathbb T^1$,
$$
g(a,t):=D\psi_p^{-1}(a)_*\tilde g_{q(f\circ\phi_t),f\circ\phi_t}(\psi(a))=\frac{Dh_t(a)(v^s(a))}{\|Dh_t(a)(v^s(a))\|}
$$
where $D\psi_p^{-1}(a)_*$ is the diffeomorphism induced by $D\psi_p^{-1}(a)$ on the unit tangent bundles and $v^s(a)=D\psi_p^{-1}(a)(\tilde v^s(\psi_p(a)))$. In other words, $g(\cdot,t)$ is in fact the map $\tilde g_{q(f\circ\phi_t),f\circ\phi_t}$ seen in the chart $\psi_p$ which identifies $W^c(p)$ with $\mathbb T^2$ and the unit tangent spaces to $W^c(p)$ with $\mathbb T^1$. In order to prove \ref{eq:dnezero} it is enough to show that
$$
\frac{\partial}{\partial t}g(a,t)\mid_{(a,t)=(a_0,0)}\neq 0,
$$
which in turns is equivalent to the fact that $Dh_0(a_0)(v^s(a_0))$ and $\frac{\partial}{\partial t}Dh_t(a_0)(v^s(a_0))\mid_{t=0}$ are not collinear. Using \ref{eq:dht} we obtain $Dh_0(a_0)(v^s(a_0))=L_b^{-1}\circ L_a(v^s(a_0))$ and  $\frac{\partial}{\partial t}Dh_t(a_0)(v^s(a_0))\mid_{t=0}=L_b^{-1}\circ R_{\frac{\pi}2}\circ L_a(v^s(a_0))$, which are clearly non-collinear since $L_a$ and $L_b$ are isomorphisms while $v^s(a_0)$ is non-zero. This finishes the proof of part (2).
\end{proof}

Now let us prove Proposition \ref{prop:dense}.

\begin{proof}[Proof of Proposition \ref{prop:dense}]

Let $f\in\mathcal {SH}^r_b$. We need to find maps in $\mathcal U^r_s$ arbitrarily $C^r$ close to $f$. Since the $C^{\infty}$ maps are dense in the $C^r$ maps in the $C^r$ topology (even inside the volume preserving class), we can assume that $f$ is $C^{\infty}$.

Choose $q_1,q_2,q_3$ homoclinic points of $\cF^c(p)$ such that the orbits of the homoclinic leaves $\cF^c(q_i)$ are mutually disjoint, $i\in\{1,2,3\}$.

For any $x\in \cF^c(p)$ and any $1\leq i\leq 3$, there exists $r_{x,i}>0$ such that if $y_i:=h_{p,q_i}^u(x)\in \cF^c(q_i)$, then the ball $B(y_i,r_{x,i})$ is disjoint from $\cF^c(p)$, from all the iterates $f^k(\cF(q_i)),\forall k\neq 0$, and from all the iterates of $\cF^c(q_j)$, $j\neq i$. Applying Lemma \ref{le:perturbation} part (2) we obtain the family of perturbations $\phi_{t,x,i}$ such that the derivative of  $\tilde g_{q_i,f\circ\phi_{t,x,i}}$ with respect to $t$ in $(x,0)$ does not vanish. By the continuity of the derivative there exists a neighborhood $U_{x,i}$ of $x$ such that $\frac{\partial}{\partial t}\tilde g_{q_i,f\circ\phi_{t,x,i}}$ is nonzero on $\overline U_{x,i}\times\{0\}$.

Let $U_x=\cap_{i=1}^3U_{x,i}$. By compactness of $\cF^c(p)$ there exist finitely many $x^1,x^2,\dots x^K\in \cF^c(p)$ such that $\cF^c(p)=\cup_{j=1}^KU_{x^j}$.

Let us fix some notations. Denote 
$$T=(t_i^j)_{1\leq i\leq 3,\\ 1\leq j\leq K}=(T_i)_{1\leq i\leq 3}=(T^j)_{1\leq j\leq K}\in I^{3K}:=[-\delta,\delta]^{3K},$$
with $T_i=(t_i^j)_{1\leq j\leq K}\in I^K$, $i\in\{1,2,3\}$ and $T^j=(t_i^j)_{1\leq i\leq 3}\in I^3$, $j\in\{1,2,\dots K\}$.

For every $1\leq i\leq 3$ we let $\phi_i:\mathbb T^4\times I^K\rightarrow\mathbb T^4$ given by
\begin{equation}
\phi_i(\cdot, T_i)=\phi_{t_i^1,x^1,i}\circ\phi_{t_i^2,x^2,i}\circ\dots\circ\phi_{t_i^K,x^K,i},\ \ \forall T_i\in I^K.
\end{equation}

We define $\phi,\ F:\mathbb T^4\times I^{3K}\rightarrow\mathbb T^4$, 
\begin{eqnarray*}
\phi(\cdot,T)=\phi_T(\cdot)&:=&\phi_1(\cdot, T_1)\circ\phi_2(\cdot,T_2)\circ \phi_3(\cdot,T_3),\\
F(\cdot,T)=F_T(\cdot)&:=&f\circ\phi_T.
\end{eqnarray*}

The maps $\phi_i$, $\phi$ and $F$ have the following properties:
\begin{enumerate}
\item
$\phi_i$, $\phi$ and $F$ are of class $C^{\infty}$ on $(x,T)$;
\item
$\phi_i$ is a small perturbation of the identity on a small neighborhood of $\cF^c(q_i)$, in particular it leaves the other homoclinic orbits of $\cF^c(q_j)$ unchanged for $j\neq i$;
\item
$F_T$ is equal to $f$ on a neighborhood of $\cF^c(p)$, so it does not change $\cF^c(p)$ and the function $\tilde v^s$;
 \end{enumerate}

Let $V_j=\psi_p^{-1}(U_{x^j})$, where $\psi_p:\mathbb T^2\rightarrow \cF^c(p)$ is the $C^2$ embedding. For every $1\leq i\leq 3$ define $g_i:\mathbb T^2\times I^{3K}\rightarrow\mathbb T^1$,
$$
g_i(x,T)=D\psi_p^{-1}(x)_*\tilde g_{q_i(f_T),f_T}(\psi_p(x)).
$$
In other words, $g_i(\cdot,T)$ is again the map $\tilde g_{q_i(f_T),f_T}$ seen in the chart $\psi_p$ which identifies $\cF^c(p)$ with $\mathbb T^2$ and the unit tangent spaces $T^1\cF^c(p)$ with $\mathbb T^1$. Lemma \ref{le:perturbation} part (1) tells us that $g_i$ is $C^1$ with respect to $(x,T)\in\mathbb T^2\times I^{3K}$ (maybe for a smaller interval $I$). Furthermore
\begin{equation}\label{eq:det1}
\frac{\partial g_i}{\partial t_i^j}(x,T)\neq 0,\ \forall (x,T)\in \overline V_j\times\{0\}^{3K},\ \forall 1\leq j\leq K.
\end{equation}
On the other hand, because for $l\neq i$, the perturbation $\phi_l$ does not touch the orbit of $\cF^c(q_i)$, we have
\begin{equation}\label{eq:det2}
\frac{\partial g_i}{\partial t_l^j}(x,T)= 0,\ \forall (x,T), \forall l\neq i,\ \forall 1\leq j\leq K.
\end{equation}

Define $G:\mathbb T^2\times I^{3K}\rightarrow\mathbb T^3$
\begin{equation}
G(x,T)=(g_1(x,T),g_2(x,T),g_3(x,T)).
\end{equation}

Again $G$ is $C^1$ in $(x,T)\in\mathbb T^2\times I^{3K}$. The formulas \ref{eq:det1} and \ref {eq:det2} tell us that for every $1\leq j\leq K$ we have
$$
\det\left(\frac{\partial G}{\partial T^j}(x,T)\right)=\det\left(\frac {\partial g_i}{\partial t_i^j}(x,T)\right)=\prod_{i=1}^3\frac {\partial g_i}{\partial t_i^j}(x,T)\neq 0,\ \forall (x,T)\in \overline V_j\times\{0\}^{3K}
$$
From the compactness of $\overline V_j$ and the $C^1$ continuity of $G$ with respect to $T$, there exists $0\in J\subset I$ such that, for all $1\leq j\leq K$ we have
\begin{equation}\label{eq:rank}
\det\left(\frac{\partial G}{\partial T^j}(x,T)\right)\neq 0,\ \forall (x,T)\in V_j\times J^{3K},
\end{equation}
and since every point from $\mathbb T^2$ is inside some $V_j$ we conclude that $G$ has maximal rank at every point in $\mathbb T^2\times J^{3K}$.

Remember that $v^s:\mathbb T^2\rightarrow\mathbb T^1$ is the $C^1$ map given by $v^s(x)=D\psi_p^{-1}(x)_*\tilde v^s(\psi(x))$. Let $A:=\{(x,T)\in\mathbb T^2\times J^{3K}:\ G(x,T)\in\{-v^s(x),v^s(x)\}^3\}$  and $B=\pi_2(A)$, where $\pi_2$ is the projection from $\mathbb T^2\times J^{3K}$ on the $T$ component in $J^{3K}$.

A simple consequence of the above definitions is the fact that if $T\notin B$ then $f_T\in \mathcal U_s^r$. In order to finish the proof of the density of $\mathcal U_s^r$ we have to find $T$ arbitrarily close to $0^{NK}$ such that $T\notin B$. We will prove in fact that $B$ has Lebesgue measure zero in $J^{3K}$.

It is enough to show this for $B_1=\pi_2(A_1)$ where $A_1=\{(x,T)\in\mathbb T^2\times J^{3K}:\ G(x,T)=v^s(x)^3\}$, the other combinations of $\pm v^s$ work similarly. Let $H(x, T)=G(x,T)-v^s(x)^3$, this is a $C^1$ map from $\mathbb T^2\times J^{3K}$ to $\mathbb T^3$. Formula \ref{eq:rank} tells us that $H$ has maximal rank equal to 3 at every point ($v^s$ is independent of $T$), so $H^{-1}(0^3)$ is a $C^1$ submanifold of codimension 3 (or dimension $3K-1$) inside $\mathbb T^2\times J^{3K}$. Since $\pi_2\mid_{H^{-1}(0^3)}:H^{-1}(0^3)\rightarrow J^{3K}$ is a $C^1$ map, Sard's Theorem tells us that the image $B_1$ has Lebesgue measure zero.

This implies that we can find arbitrarily small $T\notin B$, which finishes the proof of the $C^r$ density of $\mathcal U_s^r$.

\end{proof}

\subsection{Proof of Proposition \ref{prop:related}}\label{ss:hr}

\begin{proof}
We first remark that, because of the transitivity of the homoclinic relation,  it is enough to show that every hyperbolic periodic point of index 2 of $f\in\mathcal U^r$ is homoclinically related to the fixed point $p$ of the hyperbolic fixed leaf $\cF^c(p)$.

Let $x$ be a hyperbolic point of $f\in\mathcal U^r$ of index 2. Let $\tilde v^u(x)$ be the unit tangent vector to the weak unstable direction inside $T_x\cF^c(x)$. The strong unstable manifold $\cF^u(x)$ must accumulate on the fixed hyperbolic leaf $\cF^c(p)$, so there exists a sequence of homoclinic points $p_n\in \cF^u(x)\cap \cF^s_{loc}(\cF^c(p))$ such that $\lim_{n\rightarrow\infty}p_n=p_0\in \cF^c(p)$. If for some $p_n$ we have that $Dh^u_{x,p_n*}(\tilde v^u(x))\neq\pm Dh^s_{p,p_n*}(\tilde v^s(h^{s^{-1}}_{p,p_n}(p_n)))$ then the 2-dimensional unstable manifold of $x$, $W^u(x)$, intersects $\cF^c(p_n)$ in a $C^1$ curve locally transverse to the weak stable foliation inside $\cF^c(p_n)$ (which is the pushed forward by the stable holonomy of the weak stable foliation in $\cF^c(p)$). Since the 2-dimensional global stable manifold of $p$, $W^s(p)$, is dense inside the weak stable foliation of $\cF^c(p_n)$, we obtain a transverse homoclinic intersection from $x$ to $p$. 

Suppose that $W^u(x)\cap W^s(p)=\emptyset$. The above argument implies that
$$
Dh^u_{x,p_n*}(\tilde v^u)=\pm Dh^s_{p,p_n*}(\tilde v^s(h^{s^{-1}}_{p,p_n}(p_n)))\ \hbox{ for all }\ n\in\mathbb N.
$$
Since $f\in\mathcal U$, there exists a homoclinic point $q$ of $\cF^c(p)$ such that $\tilde g_q(p_0)\neq\pm\tilde v^s(p_0)\in T^1\cF^c(p)$. Let $q_0:=h^u_{p,q}(p_0)$, consider the strong unstable holonomy $h^u_{loc}:\cF^{cs}_{loc}(p_0)\rightarrow \cF^{cs}_{loc}(q_0)$ and let $q_n:=h^u_{loc}(p_n)\in \cF^{cs}_{loc}(q_0)$. Then $q_n\rightarrow q_0$. The lack of homoclinic relations between $x$ and $p$  implies that also
$$
Dh^u_{x,q_n*}(\tilde v^u)=\pm Dh^s_{p,q_n*}(\tilde v^s(h^{s^{-1}}_{p,q_n}(q_n)))\ \hbox{  for all }\ n\in\mathbb N.
$$
Since $h_{x,q_n}^u=h_{p_n,q_n}^u\circ h_{x,p_n}^u$ we obtain that
$$
Dh^s_{p,q_n*}(\tilde v^s(h^{s^{-1}}_{p,q_n}(q_n)))=\pm Dh^u_{p_n,q_n*}\circ Dh^s_{p,p_n*}(\tilde v^s(h^{s^{-1}}_{p,p_n}(p_n))).
$$
Using the continuity of $\tilde v^s$ and of $Dh^{s,u}$ we can pass to the limit and obtain that
$$
Dh^s_{p,q*}(\tilde v^s(h^{s^{-1}}_{p,q}(q_0)))=\pm Dh^u_{p,q*}(\tilde v^s(p_0)),
$$
or $\tilde g_q(p_0)=\pm\tilde v^s(p_0)$, which is a contradiction.

The proof of the intersection of the global stable manifold of $x$ with the global stable manifold of $p$ is similar. This concludes the proof.
\end{proof}

\section{Proof of Corollary~\ref{maincor.suppporthomoclinic}}

\begin{proof}
We have to show that for any $f\in\mathcal U^r$, the transverse homoclinic intersections of the invariant manifolds of the fixed hyperbolic point $p_f$ are dense in $\TT^4$. The proof uses the same ideas from the proof of Proposition \ref{prop:related}

Let $f\in \mathcal U^r$, and $p$ the hyperbolic fixed point of $f$ (for simplicity we will drop the index $f$ in the following arguments). Let $U$ be an open set in $\TT^4$. Since $W^u(p)\cap W^s(\cF^c(p))$ is dense in $\TT^4$, choose $x\in W^u(p)\cap W^s(\cF^c(p))$ such that $B(x,\delta)\subset U$ for some $\delta>0$. If $Dh^s_{p,x*}(\tilde v^s(h^s_{x,p}(x)))\notin T_xW^u(p)$, then clearly there is a transverse homoclinic intersection between $W^s(p)$ and $W^u(p)$ arbitrarily close to $x$. 

Suppose that $v:=Dh^s_{p,x*}(\tilde v^s(h^s_{x,p}(x)))\in T_xW^u(p)$. Then there exists a subsequence $n_k\rightarrow\infty$ and $(p_0,v_0)\in T^1\cF^c(p)$ such that $Df^{n_k}_*(x,v)\rightarrow (p_0,\tilde v^s(p_0))$. There exists a homoclinic point $q$ of $\cF^c(p)$ such that $\tilde g_q(p_0)\neq\pm\tilde v^s(p_0)\in T^1\cF^c(p)$. We consider again the strong unstable holonomy $h^u_{loc}:\cF^{cs}_{loc}(p_0)\rightarrow \cF^{cs}_{loc}(q_0)$ and let $q_k:=h^u_{loc}(f^{n_k}(x))\in \cF^{cs}_{loc}(q_0)$, where $q_0:=h^u_{p,q}(p_0)$. We have that
$$
Dh^s_{p,q*}(\tilde v^s(h^{s^{-1}}_{p,q}(q_0)))\neq\pm Dh^u_{p,q*}(\tilde v^s(p_0)),
$$
and by continuity for all $k$ large enough we have
$$
Dh^s_{p,q_k*}(\tilde v^s(h^{s^{-1}}_{p,q_k}(q_k)))\neq\pm Dh^u_{f^{n_k}(x),q_k*}(\tilde v^s(f^{n_k}(x))).
$$
Iterating by $f^{-n_k}$ and denoting $f^{-n_k}(q_k)=x_k\rightarrow x$ we obtain
$$
Dh^s_{p,x_k*}(\tilde v^s(h^{s^{-1}}_{p,x_k}(x_k)))\neq\pm Dh^u_{x,x_k*}(\tilde v^s(x))
$$
Since $Dh^u$ preserves $TW^u(p)$ we obtain that $Dh^s_{p,x_k*}(\tilde v^s(h^s_{x_k,p}(x_k)))\notin T_{x_k}W^u(p)$, with $x_k\in W^u(p)\cap W^s(\cF^c(p))$, and this implies again that arbitrarily close to $x_k$ (and thus close to $x$) there are transverse homoclinic intersection between $W^s(p)$ and $W^u(p)$. This finishes the proof.
\end{proof}

\section{Proof of Theorem~\ref{main.measured homoclinic }}

\begin{proof}
For simplicity, we may assume 
\begin{equation}\label{eq.boundpotention}
0\leq \inf \phi \leq \sup \phi \leq \log \lambda_B.
\end{equation}

First by the variation principle, there is a sequence of ergodic measures $\mu_n$ of $f$ such that
$$\limsup h_{\mu_n}=h_{top}(f)\geq \log \lambda_A+\log \lambda_B,$$ 
the last inequality comes from \eqref{eq.globalentropy}.

Again by the variation principle, pressure of the function $\phi$:
$$P_{top}(\phi) \geq \limsup (h_{\mu_n}+\int \phi d\mu_n)\geq \limsup h_{\mu_n}\geq \log \lambda_A+\log \lambda_B,$$
where the last inequality comes from the assumption that $\phi>0$ (\eqref{eq.boundpotention}).

Thus for any ergodic measure $\mu$ with pressure sufficient large, i.e.,
\begin{equation}\label{eq.lowbound}
h_\mu+\int \phi d\mu >P_{top}(\phi)+ (\int \phi d\mu -\log \lambda_B),
\end{equation}
we have 
$$h_\mu > P_{top}(\phi)-\log \lambda_B \geq \log \lambda_A.$$
As a consequence of Lemma~\ref{l.largeentropy}, $\mu$ is a hyperbolic measure with stable index $2$.
By Lemma~\ref{l.closing}, $\mu$ is homoclinically related to the atomic measure supported on a hyperbolic periodic point $O$.
Since $f\in \cV$, by Theorem~\ref{main.homoclinic intersection}, all the hyperbolic periodic orbits with stable index $2$ are homoclinically
related, as a consequence of Remark~\ref{rk.homoclinic measures}, all the hyperbolic ergodic measures with stable index $2$ are 
homoclinically related. In particular, all the ergodic measures satisfies \eqref{eq.lowbound} are homoclinically related.

Thus, all equilibrium states for the H\"older potential $\phi$ are homoclinically related, if they do exist. By Proposition~\ref{p.criterion},
there exists at most one equilibrium state. The proof is complete. 
\end{proof}

\bibliographystyle{plain}
\bibliography{shubclass}

\end{document}